\numberwithin{equation}{section}
\newtheorem{thm}{Theorem}[section]
\newtheorem{theorem}[thm]{Theorem}
\newtheorem{cor}[thm]{Corollary}
\newtheorem{prop}[thm]{Proposition}
\theoremstyle{definition}
\newtheorem{remark}[thm]{Remark}
\newtheorem{defn}[thm]{Definition}
\newtheorem{defn-thm}[thm]{Definition-Theorem}
\DeclarePairedDelimiter{\ceil}{\lceil}{\rceil}
\DeclarePairedDelimiter{\floor}{\lfloor}{\rfloor}
\newcommand{\RN}[1]{%
  \textup{\uppercase\expandafter{\romannumeral#1}}%
}
\def\beq{\begin{eqnarray}}
\def\eeq{\end{eqnarray}}
\newcommand{\nn}{\nonumber}
\newcommand{\kmax}{k_{max}}
\newcommand{\Rea}{\operatorname{Re}} 
\numberwithin{equation}{section}
        \definecolor{pink}{rgb}{1,0,1}
        \definecolor{purple}{rgb}{0.4,0.2,1}
\newcommand{\bM}{{\bf M}} 
\newcommand{\ovM}{\overline{M}} 
\newcommand{\Ker}{\operatorname{Ker}}
\newcommand{\Scal}{\operatorname{Scal}}
\newcommand{\pa}{\partial}
\newcommand{\eps}{\varepsilon}
\newcommand{\tr}{{\rm Tr}}
\newcommand{\Tr}{{\rm Tr}}
\newcommand{\cN}{{\mathcal{N}}}
\newcommand{\cM}{{\mathcal{M}}}
\newcommand{\N}{\mathbb{N}}
\newcommand{\R}{\mathbb{R}}
\newcommand{\C}{\mathbb{C}}
\newcommand{\Z}{\mathbb{Z}}
\newcommand{\bS}{\mathbb{S}}
\newcommand{\cS}{\mathcal{S}}
\newcommand{\cC}{\mathcal{C}}
\newcommand{\cL}{\mathcal{L}}
\newcommand{\wt}[1]{\widetilde{#1}}
\begin{document}

\title[Polyakov formulas for conical singularities in two dimensions]{Polyakov formulas for conical singularities in two dimensions}  
\author[C.-L.~Aldana, K.~Kirsten, and J.~Rowlett]{Clara L. Aldana, Klaus Kirsten, and Julie Rowlett}

\begin{abstract}
We investigate the zeta-regularized determinant and its variation in the presence of conical singularities, boundaries, and corners.  For surfaces with isolated conical singularities which may also have one or more smooth boundary components, we demonstrate both a variational Polyakov formula as well as an integrated Polyakov formula for the conformal variation of the Riemannian metric with conformal factors which are smooth up to all singular points and boundary components.  We demonstrate the analogous result for curvilinear polygonal domains in surfaces.  We then specialize to finite circular sectors and cones and via two independent methods obtain variational Polyakov formulas for the dependence of the determinant on the opening angle.  Notably, this requires the conformal factor to be logarithmically singular at the vertex.  We further obtain explicit formulas for the determinant for finite circular sectors and cones.   \end{abstract}

\maketitle

\section{Introduction} \label{s:intro}
In physics, manifolds with conical singularities are of great importance. In particular in the context of quantum field theory in curved
spacetime the influence of such singularities has been analyzed in detail \cite{dowk77-10-115}. Instances where these singularities occur
are cosmic strings \cite{vile85-121-263,hell86-33-1918,frol87-35-3779,dowk87-36-3095}, where the cone angle is related to the string tension parameter, and static spacetimes with bifurcate Killing horizons, where the Euclideanized version, considered in finite temperature field theory, has topology $C_\alpha \times S^2$, and the conical angle $\alpha$ is associated to the inverse temperature
\cite{furs94-49-987,furs95-12-393,call94-333-55,suss94-50-2700,dowk94-11-L55,barv95-51-1741}. Renormalization in these theories
necessitates the heat kernel coefficients for manifolds with conical singularities \cite{furs94-334-53,cogn94-49-1029,dowk94-11-L137}.

These coefficients also build the foundation to understand how zeta regularized determinants transform under conformal transformations. In two dimensions, for smooth surfaces with smooth boundaries, this relation has been known for quite some time \cite{alv,lusc80-173-365,poly81-103-207}. In four dimensions, this has been developed in \cite{bran94-344-479} (see also \cite{dowk78-11-895}) for the case of Laplace-type operators on smooth Riemannian manifolds with smooth boundaries. This found applications in the context of effective action in quantum field theory; see
\cite{blau88-209-209,blau89-4-1467,buch86-44-534,dett92-377-252,dowk89-327-267,wipf95-443-201,gusy87-46-1097}. The relation is also essential  to prove certain extremal properties of determinants \cite{bran92-149-241,osgo88-80-212,AAR}. In two dimensions, this so-called Polyakov formula was generalized to the case of piecewise smooth boundary in \cite{dowk94-11-557}, and it has been used to compute functional determinants on different regions of the plane and sphere \cite{dowk94-11-557,dowk95-12-1363}.

\subsection{Geometric setting} \label{ss:geometric_setting}
We consider here compact surfaces with boundary and with finitely many isolated conical singularities, as well as curvilinear polygonal domains contained in larger, ambient, smooth surfaces. Whereas many references assume that conical singularities are \em exact, \em we consider a more general conic singularity of the type given in \cite[Definition 1.1]{mooers}.
Here we specify this definition to two dimensions and adapt it to our needs.

\begin{defn}\label{def:conicmet} Let $\bM$ be a compact $2$-dimensional topological manifold with boundary $\pa \bM$.  Assume that there is a finite set of interior points $\wp := \{p_1, \ldots, p_m\}$ such that $M := {\bM} \setminus \left(\wp \cup \pa {\bM} \right)$ is a smooth, open, manifold of dimension two endowed a with Riemannian metric $g$. The boundary $\pa {\bM}$ is smooth, and the metric $g$ is smooth up to the boundary. Moreover, assume that each $p_i \in \wp$ has a neighborhood of the form
\[ \cN_i \cong [0, \eps_i]_r \times \bS^1, \textrm{ for some } \eps_i > 0. \]
On this neighborhood the Riemannian metric
\[ \left . g\right|_{\cN_i} = dr^2 + r^2 \omega_i(r), \quad r \in [0, \eps_i) \]
where $\omega_i(r)$ is a smooth family of metrics on $\bS^1$ for $r \in [0, \eps_i)$.  The points in $\wp$ are known as \em cone points, conical points, or conical singularities.  \em  The angle at a cone point $p_i$ is defined to be
\[ \gamma_i := \int_{\bS^1} ds_i, \]
where $ds_i = ds_{\omega_i(0)}$, denotes the volume form associated to the metric $\omega_i(0)$.  The angle is assumed to be contained in $(0, 2\pi)$. If $\bf M$ satisfies all these conditions, we call $({\bM}, g)$ a surface with conical singularities and smooth boundary.
\end{defn}

Notice that whereas $\ovM = \bM$, $\pa {\bM} \neq \pa M = \wp \cup \pa \bM$. In addition, the definition allows the metric on the \em link \em of the cone, $\bS^1$, to vary as one approaches the conical singularity.  Some authors may call our definition above a surface with `generalized conical singularities.'  A more rigid definition requires each $\omega_i (r) \equiv \omega_i (0)$ to be a single fixed metric on the link; this is known as a surface with `exact conical singularities.' \\

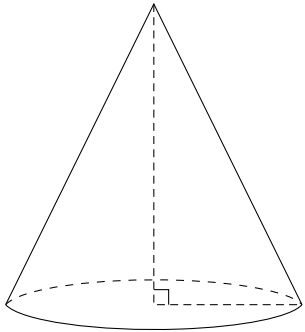
\begin{figure}
  \begin{tikzpicture}
    \draw[dashed] (0,0) arc (170:10:2cm and 0.4cm)coordinate[pos=0] (a);
    \draw (0,0) arc (-170:-10:2cm and 0.4cm)coordinate (b);
    \draw[densely dashed] ([yshift=4cm]$(a)!0.5!(b)$) -- node[right,font=\footnotesize] {$$}coordinate[pos=0.95] (aa)($(a)!0.5!(b)$)
                            -- node[above,font=\footnotesize] {}coordinate[pos=0.1] (bb) (b);
    \draw (aa) -| (bb);
    \draw (a) -- ([yshift=4cm]$(a)!0.5!(b)$) -- (b);

  \end{tikzpicture}
\caption{This is an exact cone with conical singularity at $r=0$.   The $r$ coordinate gives the length along the cone's surface.  The $\theta$ coordinate corresponds to arc-length around the circular edge of the cone.}
  \label{fig:cone}
\end{figure}

We will also consider finite circular sectors and more generally, curvilinear po\-ly\-go\-nal domains in surfaces as well as in the plane.  A finite circular sector in the plane is a set of the form
\[ S_{R,\gamma} := \{ (r, \theta): 0 \leq r \leq R, \quad 0 \leq \theta \leq \gamma \} \subset \R^2. \]
Here we are using standard polar coordinates $(r, \theta)$, and we equip the sector with the Euclidean metric, which in these coordinates is
\[ g = dr^2 + r^2 d\theta^2. \]
Consequently, a finite circular sector is a cone with link $[0, \gamma]$ in the sense that we may identify the finite circular sector with the compact metric space $[0, R] \times [0, \gamma]$ equipped with the metric $g$.  This metric is smooth on the interior and has three singularities, one of which is a conical singularity at $r=0$.  The angle at this singularity is $\gamma$. The other two singularities have angles $\pi/2$ and are examples of conical singularities that are not exact conical singularities.
 A finite circular sector does not fit in Definition \ref{def:conicmet} above because its conical points lie at the boundary, but it is an example of a curvilinear polygonal domain in the plane.  More generally, we define curvilinear polygonal domains in surfaces as in \cite[Definition 1.3]{nrs}.

\begin{defn}\label{def:curvpoly} We say that $\Omega$ is a \em curvilinear polygonal domain \em if it is a subdomain of a smooth, two dimensional Riemannian manifold $(M,g)$ with piecewise smooth boundary and a vertex at each non-smooth point of $\partial\Omega$. A \em vertex \em is a point $p$ on the boundary of $\Omega$ at which the following are satisfied.
\begin{enumerate}
\item The boundary in a neighborhood of $p$ is defined by a continuous curve $\gamma(t): (-a, a) \to M$ for $a > 0$ with $\gamma(0) = p$.  We require that $\gamma$ is smooth {on $(-a,0]$ and $[0,a)$}, with $||\dot \gamma(t)|| =1$ for all $t \in (-a, a)\setminus \{0\}$, and  such that
\[\lim_{t \uparrow 0} \dot \gamma (t) = v_1, \quad \lim_{t \downarrow 0} \dot \gamma  (t) = v_2,\]
for some vectors $v_1,v_2\in T_{p}M$, with $- v_1 \neq v_2$. 
\item The \em interior angle \em at the point $p$ is the \em interior angle \em at that corner, which is the angle between the vectors $-v_1$ and $v_2$.
\end{enumerate}
Note that requiring $-v_1$ and $v_2$ to be distinct means that the interior angle will be an element of $(0,2\pi)$, which rules out inward and outward pointing cusps. An angle of $\pi$, corresponding to a phantom vertex, is allowed.
\end{defn}

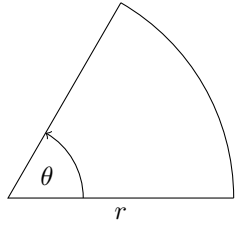
\begin{figure}  \begin{tikzpicture} \coordinate (O) at (0,0); \draw    (3cm,0) coordinate (xcoord) --    node[midway,below] {$r$} (O) --    (60:3cm) coordinate (slcoord)   pic [draw,->,angle radius=1cm,"$\theta$"] {angle = xcoord--O--slcoord}; \draw pic[draw, -, angle radius =3cm]{angle=xcoord--O--slcoord};    \end{tikzpicture} \caption{Above is a finite circular sector.  This can be viewed as a cone where the link of the cone is an interval.  This is an example of a domain with corners.} \label{fig:sector}  \end{figure}

\subsection{The zeta regularized determinant of the Laplace operator} \label{ss:defining_determinant}
Our sign convention for the Laplace operator in dimension two in local coordinates is
\begin{equation} \label{laplacesign} \Delta_g = - \frac{1}{\sqrt{\det(g)}} \sum_{i,j=1} ^2 \pa_i g^{ij} \sqrt{\det(g)} \pa_j.
\end{equation}
Here we restrict our attention to the Friedrichs extension of the Laplace operator, noting that in two dimensions, this is equal to the Dirichlet extension \cite{domains}.  For surfaces with conical singularities and no boundary components, the eigenvalues of the Laplacian begin with $0$ and increase towards $\infty$.  For surfaces with conical singularities and at least one smooth boundary component as well as for curvilinear polygonal domains, the eigenvalues also tend to $\infty$, but in this case they are all positive.  We denote the entire collection of eigenvalues which comprise the spectrum of the Laplacian, by $\{ \lambda_k \}_{k \geq 0}$.  Then, there is an associated spectral zeta function,
\[ \zeta_{g} (s) := \sum_{\lambda_k \neq 0} \lambda_k ^{-s}. \]

One also has a corresponding heat operator and heat kernel, the Schwartz kernel of the fundamental solution to the heat equation. The trace of the heat operator, $\tr (e^{-t \Delta_g})$ is then expressed in terms of the eigenvalues,
\[ \tr (e^{-t \Delta_g}) = \sum_{k \geq 0} e^{-\lambda_k t}. \]
It is related to the spectral zeta function by
\begin{equation} \label{mellin} \zeta_{g} (s) = \frac{1}{\Gamma(s)} \int_0 ^\infty t^{s-1} \Tr \left( e^{-t \Delta_g}  - P_{\Ker (\Delta_g)} \right) dt = \frac{1}{\Gamma(s)} \int_0 ^\infty t^{s-1} \sum_{\lambda_k > 0} e^{-\lambda_k t}dt. \end{equation}
Above, $P_{\Ker (\Delta_g)}$ is the projection onto the kernel of the Laplacian.
The eigenvalues, $\{\lambda_k\}_{k \geq 0}$ grow asymptotically like $k$ as $k \to \infty$, with the precise asymptotics given by Weyl's Law \cite{weyl}.  It is then clear to see that whereas $\zeta_{g}(s)$ is well-defined for $s \in \C$ with $\Rea(s)>1$, it is less obvious that $\zeta_{g}(s)$ is well-defined for other values of $s$.

A keen observation of Ray and Singer \cite{rays} is that one may exploit the existence of the asymptotic expansion of the heat kernel for small time, together with \eqref{mellin}, to meromorphically extend the spectral zeta function.  This extension is holomorphic in a neighborhood of $s=0$, and so the determinant of the Laplacian is defined to be
\begin{equation} \label{eq:def_det0}  \det(\Delta_g) := e^{-\zeta'_{g} (0)}. \end{equation}
However, in \cite{rays}, there were no conical singularities.  The presence of even the simplest conical singularity has a profound impact on the Laplace operator.  The Laplace operator is not essentially self-adjoint, has many self adjoint extensions, and the spectrum depends on the choice of self-adjoint extension. The zeta-regularized determinant of the Laplacian also depends upon this choice \cite{gm-zeta}.

Nonetheless, it has been shown that the heat trace on a surface with conical singularities also has an asymptotic expansion for small values of $t$. In general this expansion takes the form
\begin{equation} \label{eq:heat_trace_expansion0} \tr(e^{-t\Delta_g}) = a_0 t^{-1} + a_1 t^{-\frac{1}{2}} + a_{2,0}\log(t) + a_{2,1}  + O(t^{\frac12}),  \text{ as } t\to 0, \end{equation}
see e.g. \cite[section 7, equations (7.22) and (7.23)]{BS-reso} and \cite[Theorem 5.1]{cheeger}. However, it can be shown, \cite[section 3]{King}, that for surfaces with conical singularities, the coefficient of $\log(t)$ vanishes.  
For more general curvilinear polygonal domains, this result is much more recent \cite{nrs}.  The subtlety lies in the fact that the corner need never be exactly straight.  Consequently, numerous results for `exact conical singularities' as well as results which require that the edges are straight, at least in some small neighborhood of the corner, exclude curvilinear polygonal domains.

Associated to the Riemannian metric $g$ on the surface with conical singularities, there is the scalar curvature $\Scal_g$. This  is a well defined function on $M$ but not on $\bM$ due to the presence of conical singularities.  We consider here the restriction of $\Scal_g$ to $M$ and note that this is sometimes known as the regularized scalar curvature.  This scalar curvature vanishes on the interior of an exact cone as well as the interior of a circular sector. Its connection to the constant term in the asymptotic expansion of the heat trace, $a_{2,1}$, is given explicitly in \cite[eqn(1.4)]{htap}, that we recall here:
\[a_{2,1} = \frac{1}{12 \pi} \left( \int_{M}  K_g  dA_{g} 
+ \sum_{j=1}^{\ell} \int_{\pa M} k_{g,j} ds_g \right) 
+ \sum_{i=1} ^m  \frac{ (2\pi)^2 - \gamma_i^2}{24 \pi \gamma_i}. \]  
Above, $K_g$ is the Gaussian curvature of $M$, $k_{g,j}$ is the geodesic curvature of the $j$-th boundary component, and $\ell$ is the number of boundary components. 

In recent years there has been progress towards understanding the behavior of the determinant of certain self-adjoint extensions of the Laplace operator on  surfaces with conical singularities. This progress represents different aspects studied by numerous authors; a non-exhaustive list includes the works of Kokotov \cite{kokotov1, kokotov2}, Hillairet and Kokotov \cite{hk}, Kokotov and Koronkin \cite{KoKo}, Kalvin \cite{kalvin1, kalvin2}, Kirsten et al \cite{kirstenloyapark1, kirstenloyapark2}, Loya et al \cite{LoMcDPa}, Spreafico \cite{Spre}, and Sher \cite{Sher}.

In \cite{kalvin2}, the author proves a Polyakov formula for surfaces with conical singularities with and without boundaries.  In Remark \ref{r:cwKw} we explain how that work differs from ours.  Previously, using heuristic arguments, \cite[equation (51)]{AuSa} computed a formula for the contribution of the corners to the variation of the determinant on a polygon. Here we use different techniques to rigorously prove both the differentiated and integrated Polyakov formula for surfaces with conical singularities, boundary, and curvilinear polygonal domains in surfaces.

To study the variation of the determinant in this geometric generality, refined information concerning the behavior of the heat kernel itself, not only its trace, is required.  Although it has been widely assumed that a heat trace expansion of the form \eqref{eq:heat_trace_expansion0} holds for curvilinear polygonal domains, a rigorous proof even in the planar Dirichlet case was not given until \cite{corners}. Similar results hold for Neumann boundary conditions; see \cite{htap}.  For curvilinear polygonal domains in the plane and in surfaces, the existence of an asymptotic expansion of the heat trace for small times demonstrated in \cite{nrs} allows one to extend the zeta function to a meromorphic function on the complex plane which is regular at $s=0$ and define the determinant of the Laplacian as in \eqref{eq:def_det0}.  Moreover, the microlocal construction of the heat kernel in \cite{nrs} is a key technical ingredient in our present work to study the variation of the determinant.

\subsection{Main results} \label{ss:results}
Our first results are Polyakov formulas for surfaces with isolated conical singularities, with or without smooth boundary components, and for conformal factors that are smooth up to the cone points and the boundary.  

\begin{theorem} \label{t:smoothcf-nb} Let $(\bM, g)$ be a surface that has isolated conical singularities at points $ \wp= \{p_1, \ldots, p_m\}$ with corresponding angles $\{\gamma_1, \ldots, \gamma_m\}$ and such that $\pa \bM =\emptyset$.  Let $M = \bM \setminus \wp$. Let $\Delta_g$ be the Friedrichs extension of the Laplacian with respect to the Riemannian metric, $g$.  Let $\{ h_u = e^{2 \varphi_u} g\}_{u\in (-\eps, \eps)}$ be a smooth one-parameter family of conformal metrics for a fixed $\eps > 0$, i.e., we assume that the functions $\varphi_u (z)$ depend analytically on the parameter $u$, and that both $\varphi_u (z)$, and $\pa_u \varphi_u (z)$ are smooth functions on $\bM$. In particular they are smooth up to and including all cone points. For a given metric $*$ on $\bM$, let $\Scal_*$ denote its scalar curvature on $M$, $dA_*$ denotes the corresponding area form, and $A_*$ denote the area of $(\bM, *)$.
Then, we have the variational Polyakov formula:
\beq  \left . \frac{\pa}{\pa u} \left( -  \log \det(\Delta_{h_u}) \right) \right|_{u=0} &= & \int_{M} 2 \dot \varphi_0(z) \left(\frac{\Scal_{h_0}(z)}{24 \pi} -\frac{1}{A_{h_0}}\right) dA_{h_0} \nn \\ & + & \sum_{i=1} ^m  \dot \varphi_0 (p_i) \frac{(2\pi)^2 - \gamma_i^2}{12 \pi \gamma_i}, \nn \eeq 

\noindent where $\dot \varphi_0 := \pa_u \varphi_u \vert_{u=0}$. Assume now that the conformal factors are of the form
\[ \varphi_u = \varphi_0 + u \eta, \]
for functions $\varphi_0$ and $\eta$ that are smooth up to all the conical points.  Then for the metric $h_0 = e^{2 \varphi_0} g$, we have the integrated Polyakov formula
\beq 
\log \det (\Delta_{h_0}) - \log \det (\Delta_{g})
&= & -\frac{1}{12 \pi} \int_{M} \Scal_{g}(z) \ \varphi_0(z) \ dA_{g} \nn \\ 
&-& \frac{1}{12\pi}  \int_{M} \vert \nabla_{g}\varphi_0(z) \vert^{2} \ dA_{g} \nn \\
& +&  \log(A_{h_0}) -  \log(A_g)
-  \sum_{i=1} ^m  \varphi_0 (p_i)  \frac{ (2\pi)^2 - \gamma_i ^2}{12 \pi \gamma_i} . \label{eq:IPf-swcsnb}
\eeq 
\end{theorem}

Next, we consider surfaces that may have both smooth boundary components as well as conical singularities.
\begin{theorem} \label{t:smoothcf-b}
Let $(\bM, g)$ be a surface that has isolated conical singularities at points $\wp = \{p_1, \ldots, p_m\}$ with corresponding angles $\{\gamma_1, \ldots, \gamma_m\}$ and with smooth boundary $\pa \bM \neq \emptyset$.  Let $M = \bM \setminus (\wp \cup \pa \bM)$.  Let $\Delta_g$ be the Friedrichs extension of the Laplacian with respect to the Riemannian metric, $g$.  Let $\{ h_u = e^{2 \varphi_u} g\}_{u\in (-\eps, \eps)}$ be a smooth one-parameter family of conformal metrics for a fixed $\eps > 0$, i.e., we assume that the functions $\varphi_u (z)$ depend analytically on the parameter $u$, and that both $\varphi_u (z)$, and $\pa_u \varphi_u (z)$ are smooth functions on $\bM$. In particular they are smooth up to and including all cone points and all boundary components. For the Riemannian metric $h_0$, let $\frac{\pa \psi}{\pa n_{h_0}}$ denote the normal derivative of the function $\psi$ at the boundary, $k_{h_0}$ denote the geodesic curvature of the boundary, and $dx_{h_0}$ denote the length measure on the boundary.

Then, we have the variational Polyakov formula:
\begin{multline*} \left . \frac{\pa}{\pa u} \left( -\log \det(\Delta_{h_u})\right) \right|_{u=0} =  \int_{M}  \dot \varphi_0(z) \left(\frac{\Scal_{h_0}(z)}{12  \pi}\right) dA_{h_0} \\
+ \frac{1}{6\pi} \int_{\pa M} \dot \varphi_0 (x) k_{h_0} (x) dx_{h_0} + \frac{1}{4\pi} \int_{\pa M} \frac{\pa \dot \varphi_0}{\pa n_{h_0}} (x) dx_{h_0}
+ \sum_{i=1} ^m  \dot \varphi_0 (p_i) \frac{(2\pi)^2 - \gamma_i^2}{12 \pi \gamma_i}.  \end{multline*}

Assume now that the conformal factors are of the form
\[ \varphi_u = \varphi_0 + u \eta, \]
for a function $\eta\in \cC^{\infty}(\bM)$. Then for the metric $h_0 = e^{2 \varphi_0} g$, we have the integrated Polyakov formula
\beq \label{eq:IPf-swcsb}
\log \det (\Delta_{h_0}) - \log \det(\Delta_g)
 &=&  -  \frac{1}{12\pi} \int_{M} \vert \nabla_{g} \varphi_0 (z)  \vert^{2} \ dA_{g} \nn \\ 
 &-& \frac{1}{12 \pi}  \int_{M} \varphi_0(z) \Scal_{g} (z) dA_{g} \nn \\
 &-&  \frac{1}{4\pi} \int_{\pa \bM} \frac{\pa \varphi_0(x)}{\pa n_g}dx_g \nn \\ 
 &-& \frac{1}{6\pi} \int_{\pa \bM}  \varphi_0(x) k_{g}(x) dx_g - \sum_{i=1} ^m  \varphi_0 (p_i) \frac{(2\pi)^2 - \gamma_i^2}{12 \pi \gamma_i}.
 \eeq 
\end{theorem}

\begin{remark} \label{r:cwKw}  We note that the integrated Polyakov formulae in Theorems \ref{t:smoothcf-nb} and \ref{t:smoothcf-b} can be obtained as consequences of \cite[Corollary 1.3]{kalvin2} and \cite[Corollary 1.3.2]{kalvin2}, respectively.  That work is more general in the sense that the angles are allowed to change.  Nonetheless, as V. Kalvin observes in his paper \cite[p.32]{kalvin2}, our methods are different, so our results may nonetheless be of independent and complementary interest.  Kalvin's proof is based in the BFK formulae for the determinant on a surface with conical singularities with conformal factors that have a logarithmic singularity at the cone points.  He also uses an asymptotic formula for the determinant of the Laplacian of a metric with a cone singularity on the disk of radius $\varepsilon$ as $\varepsilon \to 0^+$. For this last part, he expresses the spectral zeta functions in terms of the corresponding resolvent operator. In contrast, our formulae presented here are  {\bf variational} Polyakov's formulae. They come from a variational principle, that keeps the geometrical meaning of the problem all along the process, that is why we consider it to be relevant. Moreover, it may be possible to build upon our techniques to extend the class of metrics allowed in \cite{kalvin2}. Finally, we note that in Appendix \ref{ss:vpcone}, we prove that equations (\ref{magic1-npij}) and (\ref{miracle}) below can also be obtained starting from a variational principle.
\end{remark}

We next obtain the variational Polyakov formula and the integrated Polyakov formula for curvilinear polygonal domains in surfaces.  

\begin{theorem} \label{thm:domains} Let $\Omega$ be a curvilinear polygonal domain on the Riemannian manifold $(M,g)$, as in Definition \ref{def:curvpoly}. $\Omega$ has piecewise smooth boundary and finitely many corners at points $\{ p_1, \ldots, p_m\}$ with corresponding angles $\{\gamma_1, \ldots, \gamma_m\}$.  Let $\Delta_g$ be the Dirichlet-Friedrich extension of the Laplacian in $\Omega$.  Let $\{ h_u = e^{2 \varphi_u} g\}_{u \in (-\eps, \eps)}$ be a smooth one-parameter family of metrics conformal to $g$ for a fixed $\eps > 0$.  We assume that $\varphi_u (z)$ depends smoothly on the parameter, $u$, and that both $\varphi_u (z)$, and $\pa_u \varphi_u (z)$ are smooth functions on $M \supseteq \overline \Omega$, in particular smooth up to and including all corner points and boundary components of $\Omega$.  Then, we have the variational Polyakov formula:
\begin{multline*} \left . \frac{\pa}{\pa u} \left( -  \log \det(\Delta_{h_u}) \right)\right|_{u=0} =  \frac{1}{12 \pi} \int_{\Omega} \dot \varphi_0(z) \left(\Scal_{h_0}(z) \right) dA_{h_0} \\ +\frac{1}{6\pi} \int_{\pa \Omega} \dot \varphi_0 (x) k_{h_0} (x) dx_0 + \frac{1}{4\pi} \int_{\pa \Omega} \frac{\pa \dot \varphi_0}{\pa n_{h_0}} (x) dx_0+ \sum_{i=1} ^m  \dot \varphi_0 (p_i) \frac{\pi^2 - \gamma_i^2}{12 \pi \gamma_i}.  \end{multline*}
Assume now that the conformal factors are of the form
\[ \varphi_u = \varphi_0 + u \eta, \]
for a function $\eta$ which is smooth on $M \supseteq \overline \Omega$.  Then for the metric $h = e^{2 \varphi_0} g$, we have the integrated Polyakov formula
\begin{multline*}
\log \det (\Delta_{h_0}) - \log \det(\Delta_g)   =  -  \frac{1}{12\pi} \int_{\Omega} \vert \nabla_{g} \varphi_0(z)   \vert^{2} \ dA_{g} - \frac{1}{12 \pi}  \int_{\Omega} \varphi_0(z) \Scal_{g}(z) dA_{g}  \\
- \frac{1}{6\pi} \int_{\pa \Omega} \varphi_0(x) k_{g}(x) dx_g  - \frac{1}{4\pi} \int_{\pa \Omega} \frac{\pa \varphi_0(x)}{\pa n_g}dx_g - \sum_{i=1} ^m  \varphi_0 (p_i) \frac{\pi^2 - \gamma_i^2}{12 \pi \gamma_i}.
\end{multline*}
\end{theorem}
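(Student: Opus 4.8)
The plan is to express the variation of $\log\det(\Delta_{h_u})$ through a single coefficient in the small-time heat expansion and then to extract that coefficient from the microlocal heat-kernel parametrix of \cite{nrs}. Because $\Delta_g$ is the Dirichlet--Friedrichs extension on $\Omega$, the spectrum is strictly positive and there is no kernel to project out, so the reduction is cleaner than in the closed case of Theorem~\ref{t:smoothcf-nb}. In two dimensions $\Delta_{h_u}=e^{-2\varphi_u}\Delta_g$, hence $\pa_u\Delta_{h_u}=-2\dot\varphi_u\,\Delta_{h_u}$. After conjugating the family to act on the fixed space $L^2(\Omega,dA_g)$, Duhamel's principle and the cyclicity of the trace give $\pa_u\Tr(e^{-t\Delta_{h_u}})=-2t\,\pa_t\Tr(\dot\varphi_u\,e^{-t\Delta_{h_u}})$. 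Inserting this into the Mellin representation \eqref{mellin} and integrating by parts in $t$ yields
\begin{equation*}
\pa_u\zeta_{h_u}(s)=\frac{2s}{\Gamma(s)}\int_0^\infty t^{s-1}\,\Tr\!\left(\dot\varphi_u\,e^{-t\Delta_{h_u}}\right)dt .
\end{equation*}
The poles of the integral near $s=0$ are governed by the small-$t$ expansion of $\Tr(\dot\varphi_u e^{-t\Delta_{h_u}})$; since the $\log t$ coefficient vanishes for curvilinear polygonal domains \cite{nrs}, the only pole at $s=0$ is simple with residue the constant term $c_0$ of that expansion. Combined with $s/\Gamma(s)=s^2+O(s^3)$, this forces $\pa_u\zeta_{h_u}(s)=2c_0\,s+O(s^2)$, so that $\pa_u\bigl(-\log\det(\Delta_{h_u})\bigr)=\pa_u\zeta'_{h_u}(0)=2c_0$. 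Everything thus reduces to identifying $c_0$, the finite part as $t\to 0$ of $\Tr(\dot\varphi_0\,e^{-t\Delta_{h_0}})$.

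To compute $c_0$ I would feed the insertion $\dot\varphi_0$ into the parametrix of \cite{nrs}, which splits the heat kernel into an interior piece, smooth-edge pieces, and model-corner pieces localized at each vertex. The interior piece produces the bulk term $\frac{1}{24\pi}\int_\Omega\dot\varphi_0\,\Scal_{h_0}\,dA_{h_0}$; the edge pieces produce the smooth-boundary terms $\frac{1}{12\pi}\int_{\pa\Omega}\dot\varphi_0\,k_{h_0}\,dx_{h_0}+\frac{1}{8\pi}\int_{\pa\Omega}\frac{\pa\dot\varphi_0}{\pa n_{h_0}}\,dx_{h_0}$; and each corner piece produces $\dot\varphi_0(p_i)\,\frac{\pi^2-\gamma_i^2}{24\pi\gamma_i}$. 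Doubling $c_0$ reproduces the variational Polyakov formula exactly, the factor $\pi^2-\gamma_i^2$ reflecting that the relevant smooth comparison angle at a boundary vertex is $\pi$ rather than the $2\pi$ of an interior cone point in Theorem~\ref{t:smoothcf-b}.

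The integrated formula follows by integrating the variational formula along the linear path $\varphi_s=s\varphi_0$, $s\in[0,1]$, joining $g$ to $h_0$, for which $\dot\varphi_s\equiv\varphi_0$ and $g_s=e^{2s\varphi_0}g$. I would insert the two-dimensional conformal transformation rules $\Scal_{g_s}\,dA_{g_s}=(\Scal_g+2s\,\Delta_g\varphi_0)\,dA_g$, $k_{g_s}\,dx_{g_s}=(k_g+s\,\tfrac{\pa\varphi_0}{\pa n_g})\,dx_g$, and $\frac{\pa\varphi_0}{\pa n_{g_s}}\,dx_{g_s}=\frac{\pa\varphi_0}{\pa n_g}\,dx_g$, together with the conformal invariance of the interior angles, so that each $\gamma_i$ is constant along the path and the corner term integrates trivially. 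Performing the elementary $s$-integral and converting the curvature variation by Green's identity $\int_\Omega\varphi_0\Delta_g\varphi_0\,dA_g=\int_\Omega|\nabla_g\varphi_0|^2\,dA_g-\int_{\pa\Omega}\varphi_0\frac{\pa\varphi_0}{\pa n_g}\,dx_g$, the boundary contributions of the bulk and geodesic-curvature variations cancel, and the remaining terms collapse into the four terms of the stated integrated formula.

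The main obstacle is the rigorous corner analysis for $\Tr(\dot\varphi_0 e^{-t\Delta_{h_0}})$. Two points require care. First, I must show that the finite part localizes at the vertices, so that replacing $\dot\varphi_0$ by $\dot\varphi_0(p_i)$ near $p_i$ introduces no error in the $t^0$ term; this rests on the exponential off-diagonal decay of the model-corner heat kernel against the continuity of $\dot\varphi_0$. Second, I must confirm that curvilinear edges, which may bend arbitrarily as they approach a vertex, neither perturb the corner coefficient away from its straight-wedge value $\frac{\pi^2-\gamma_i^2}{24\pi\gamma_i}$ nor reintroduce a $\log t$ term. Both facts are precisely what the microlocal construction of \cite{nrs} delivers; the principal technical task is to carry that construction through with the smooth weight $\dot\varphi_0$ inserted into the trace.
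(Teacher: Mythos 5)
Your proposal is correct and follows essentially the same route as the paper's proof: the Duhamel/Mellin reduction of $\pa_u\bigl(-\log\det(\Delta_{h_u})\bigr)\big|_{u=0}$ to twice the constant term of $\Tr\bigl(\dot\varphi_0\,e^{-t\Delta_{h_0}}\bigr)$, followed by extraction of the interior coefficient $\Scal_{h_0}/24\pi$, the edge coefficients $k_{h_0}/12\pi$ and $\tfrac{1}{8\pi}\pa_n$, and the corner coefficient $\tfrac{\pi^2-\gamma_i^2}{24\pi\gamma_i}$ from the heat-kernel construction of \cite{nrs}, is exactly the paper's argument (the paper carries out the normal-derivative term via the half-plane image kernel and imports the wedge constant from \cite{nrs1}/\cite{cheeger}). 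The only immaterial divergence is in the integrated formula: you integrate the variational formula along the linear path $s\mapsto s\varphi_0$ and invoke Green's identity, whereas the paper recognizes the variation as the exact $u$-derivative of the explicit Polyakov functional and fixes the additive constant by setting $\varphi_0=0$; both rest on the same conformal transformation rules and produce the same cancellation of boundary terms.
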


\begin{remark}
In Theorem \ref{thm:domains}, we prove the L\"uscher-Symanzik-Weiss-Polyakov relation from \cite{lusc80-173-365}.  We note that this agrees with the result obtained by Dowker [eq. (8) in \cite{dowk94-11-557}] in a formal computation.
\end{remark}

Here, we obtain an explicit formula for the determinant for both finite cones and finite sectors. 

\begin{theorem} \label{t:sectors}
Let $S_\alpha$ be a circular sector of opening angle $\alpha$ and radius one.  Assume the Dirichlet boundary condition for the Laplacian.  Then, we have
\begin{multline} \label{eq:magicvar1}
- \log(\det(\Delta_{S_\alpha})) = \frac 1 4 (\gamma_e +2) +\frac 5 {24 \pi} \alpha + \frac 1 {12} \left( \gamma_e - \log 2\right) \left( \frac \pi \alpha + \frac \alpha \pi \right) \\
 +\int\limits_1^\infty  \frac 1 t \,\, \frac 1 {e^{\frac \pi \alpha t} -1} \,\, \frac 1 {e^t-1} dt + \int\limits_0^1  \frac 1 t \,\,\left( \frac 1 {e^{\frac \pi \alpha t} -1} \,\, \frac 1 {e^t-1} - \frac{ \alpha }{\pi t^2} + \frac{\pi + \alpha }{2\pi t} - \frac{\pi^2 + 3\pi \alpha + \alpha^2}{12\pi \alpha} \right) dt .\end{multline}
Above, $\gamma_e$ is the Euler-Mascheroni constant.  The derivative with respect to the angle,
\beq 
 \frac{d}{d\alpha} (- \log \det (\Delta_{S_\alpha})) & = &  \frac{5}{24 \pi} + \frac{1}{12} (\gamma_e - \log 2) \left( - \frac{\pi}{\alpha^2} + \frac 1 \pi \right) + \int_1 ^\infty  \frac{ \frac{\pi}{\alpha^2} e^{\frac \pi \alpha t} }{(e^{\pi t/\alpha} - 1)^2} \frac{dt}{e^t - 1} \nn \\
& +& \int_0 ^1 \left( \frac{ \frac{\pi}{\alpha^2} e^{\frac \pi \alpha t} }{(e^{\pi t/\alpha} - 1)^2} \frac{1}{e^t - 1}  -  \frac{1}{t} \left( \frac{1}{\pi t^2} - \frac{1}{2\pi t} + \frac{1}{12 \pi}  - \frac{\pi}{12\alpha^2} \right) \right)dt. \label{eq:magic_angle} \eeq
When $\alpha$ is not equal to $\frac \pi j$ for any integer $j$, then
\beq 
 \frac{d}{d\alpha} (- \log \det (\Delta_{S_\alpha})) &= &\frac{1}{3\pi} + \frac{\pi}{12 \alpha^2}  -  \sum_{k=1} ^{\ceil*{ \frac{\pi}{2\alpha} -1}}  \frac{\gamma_e + \log |\sin(k \alpha)|}{2\pi \sin^2 (k \alpha)}  \nn \\
 & +& \frac{1}{\alpha} \sin\left( \frac{\pi^2}{\alpha} \right) \int_\R \frac{ - \log 2 + 2 \gamma_e + \log(1+\cosh(s))}{8 \pi (1+\cosh(s)) (\cosh(\pi s/\alpha) - \cos(\pi^2/\alpha))} ds. \nn \eeq
If  $\alpha = \frac \pi j$ for some integer $j > 1$,  then
\beq \label{eq:dalogdetLaplSpij}
 \frac{d}{d\alpha} (- \log \det (\Delta_{S_\alpha})) &=& \frac{1}{3\pi} + \frac{\pi}{12\alpha^2} - \frac{\gamma_e}{12 \pi} \left( \frac{\pi^2}{\alpha^2} - 1 \right)  \\
 &-& \sum_{k=1} ^{\lceil \frac{\pi}{2\alpha} - 1 \rceil}  \frac{ \log \left|\sin( k \alpha) \right|}{2 \pi \sin^2 \left(k \alpha \right)}. \nn \eeq
\end{theorem}

Notice that equation (\ref{eq:dalogdetLaplSpij}) implies that at angles of the form $\alpha = \frac \pi j$, $\frac{d}{d\alpha} (- \log \det (\Delta_{S_\alpha})) > 0$. Hence  $\frac{d}{d\alpha} (\log \det (\Delta_{S_\alpha})) < 0$ and $\frac{d}{d\alpha} (\det (\Delta_{S_\alpha})) < 0$. Moreover,
$$\lim_{j\to \infty} \frac{d}{d\alpha} (\log \det (\Delta_{S_{\alpha}}))\vert_{\alpha=\pi/j} = -\infty.$$ Therefore, by continuity, we have that $\lim_{\alpha\to 0} \frac{d}{d\alpha} (\log \det (\Delta_{S_{\alpha}})) = -\infty.$ The same holds for cones, due to equation (\ref{miracle}) below.  

\begin{remark}
For several values of $j$, using the approach of Dowker as in \cite{dowk94-11-557}, the results above have been confirmed  \cite{privatecommun}.  This result notably allows for the variation of conical points that lie \em on the boundary \em of the domain, which is not allowed in \cite{kalvin2}.  Moreover, we note that our variational formula for angles of the form $\frac \pi j$ is in a \em fully explicit closed form;  \em there are no lingering infinite sums, integrals, or values of special functions requiring computation.
\end{remark}

We next obtain an explicit formula for the determinant of finite cones as well as the variational Polyakov formula for the determinant under variation of the cone angle.

\begin{theorem} \label{cthm:detcone}
Let $C_{\alpha}$ be a cone with angle $\alpha$, and height equal to one.  Then we have
\begin{align*}
- \log(\det(\Delta_{C_{\alpha}})) =& - \frac 1 2 \log(2\pi) + \frac 1 2 (\gamma_e +2) +\frac 5 {24 \pi} \alpha + \frac 1 {6} \left( \gamma_e - \log 2\right) \left( \frac{2 \pi}{ \alpha} + \frac \alpha {2 \pi} \right)  \notag \\
 & +2\int\limits_1^\infty  \frac 1 t \,\, \frac 1 {e^{\frac {2\pi} \alpha t} -1} \,\, \frac 1 {e^t-1}  dt \\
 & +2 \int\limits_0^1  \frac 1 t \,\,\left( \frac 1 {e^{\frac {2 \pi} \alpha t} -1} \,\, \frac 1 {e^t-1} - \frac{ \alpha }{2 \pi t^2} + \frac{\pi + \alpha/2 }{2\pi t} - \frac{\pi^2 + 3\pi \alpha/2 + \frac{\alpha^2}{4}}{6 \pi \alpha} \right) dt. \notag
\end{align*}
The derivative with respect to the angle for a cone of angle $\alpha$ is
\begin{align*} \frac{d}{d\alpha} \left(- \log(\det(\Delta_{C_{\alpha}})) \right) =& \frac{5}{24 \pi} + \frac{1}{6} (\gamma_e - \log 2) \left( - \frac{2 \pi}{\alpha^2} + \frac 1 {2\pi} \right) + 2 \int_1 ^\infty   \frac{ \frac{2 \pi}{\alpha^2} e^{\frac {2\pi} \alpha t} }{(e^{2 \pi t/\alpha} - 1)^2} \frac{1}{e^t - 1} dt \notag \\
& + 2 \int_0 ^1  \left( \frac{ \frac{2 \pi}{\alpha^2} e^{\frac {2\pi} \alpha t} }{(e^{2 \pi t/\alpha} - 1)^2} \frac{1}{e^t - 1}  - \frac 1 t \left( \frac{1}{2 \pi t^2 } - \frac{1}{4\pi t} - \frac{\pi}{6\alpha^2} + \frac{1}{24 \pi} \right) \right)dt.  \end{align*}
When the cone angle $\alpha$ is not equal to $\frac {2\pi}{ j} $ for any integer $j$, the equation above becomes
\beq \frac{d}{d\alpha} \left(- \log(\det(\Delta_{C_{\alpha}})) \right) &= & \frac{1}{3\pi} + \frac{\pi}{3 \alpha^2}  -  \sum_{k=1} ^{\ceil*{ \frac{\pi}{\alpha} -1}}  \frac{\gamma_e + \log |\sin(k \alpha/2)|}{2\pi \sin^2 (k \alpha/2)} \notag \\
& + & \frac{1}{\alpha} \sin\left( \frac{2 \pi^2}{\alpha} \right) \int_\R \frac{ - \log 2 + 2 \gamma_e + \log(1+\cosh(s))}{4 \pi (1+\cosh(s)) (\cosh \frac{2\pi s}{\alpha} - \cos \frac{2\pi^2}{\alpha})} ds. \label{magic1-npij} \eeq

If the angle $\alpha = \frac{2\pi}{j}$ for some integer $j$, then we have 
\beq 
\frac{d}{d\alpha} \left(- \log(\det(\Delta_{C_{\alpha}})) \right) &=&  \frac{1}{3\pi} + \frac{\pi}{3\alpha^2} - \frac{\gamma_e}{12 \pi} \left( \frac{4\pi^2}{\alpha^2} - 1 \right)  \nn \\
&-& \frac{1}{2\pi} \sum_{k=1} ^{\lceil \frac{\pi}{\alpha} - 1 \rceil}  \frac{ \log \left|\sin\left( k \alpha/2 \right)\right|}{\sin^2 \left(k \alpha/2 \right)}. \label{miracle} \eeq 
\end{theorem}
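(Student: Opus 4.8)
The plan is to diagonalise $\Delta_{C_\alpha}$ by separation of variables and then to evaluate $\zeta'_g(0)$ by the contour--integral technique of \cite{bord96}. Writing the cone metric as $dr^2+r^2\,d\theta^2$ with $r\in[0,1]$ and link $\bS^1$ of circumference $\alpha$, the angular modes $e^{2\pi i n\theta/\alpha}$, $n\in\Z$, carry angular eigenvalue $(2\pi n/\alpha)^2$, so the radial factor solves Bessel's equation of order $\nu_n:=2\pi|n|/\alpha$. Imposing the Dirichlet condition at $r=1$ and the Friedrichs condition at the tip, the eigenvalues are exactly $j_{\nu_n,k}^2$, the squares of the positive zeros of $J_{\nu_n}$. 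The mode $n=0$ occurs with multiplicity one while the modes $\pm n$ ($n\geq 1$) together have multiplicity two; the latter accounts for the overall factor $2$ in front of the integrals, and the isolated $J_0$--mode (absent in the Dirichlet sector of Theorem \ref{t:sectors}) is responsible for the additive constant $-\tfrac12\log(2\pi)$.

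I would then represent each partial zeta function $\zeta_\nu(s)=\sum_k j_{\nu,k}^{-2s}$ by the argument principle and, after rotating the contour onto the imaginary axis, as $\frac{\sin(\pi s)}{\pi}\int_0^\infty z^{-2s}\,\partial_z\log\bigl(z^{-\nu}I_\nu(z)\bigr)\,dz$, valid for $\tfrac12<\Rea(s)<1$, where $I_\nu$ is the modified Bessel function. To sum over $n$ and continue to a neighbourhood of $s=0$, the key input is the uniform (Debye) asymptotic expansion of $I_{\nu}(\nu z)$ as $\nu\to\infty$: subtracting finitely many terms of this expansion makes the $n$--sum of the remainder converge near $s=0$, while the subtracted polynomial terms sum over $n$ into Hurwitz and Riemann zeta values that are explicit at $s=0$. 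Collecting the finite part and the $s$--derivative at $s=0$ of all pieces yields the stated integral formula for $-\log\det(\Delta_{C_\alpha})$; the combination $(\gamma_e-\log 2)\bigl(\tfrac{2\pi}{\alpha}+\tfrac{\alpha}{2\pi}\bigr)$ comes from the leading Debye and zeta contributions, and the rational counterterms appear precisely as the polynomial-in-$t$ subtractions inside the $\int_0^1$ integrand.

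Differentiating the explicit formula termwise in $\alpha$ --- legitimate by dominated convergence once the $t\to 0$ subtractions are present --- gives the first derivative formula at once. For the closed forms I would evaluate $\int_0^\infty$ of the combined integrand by the residue theorem. The factor $e^{2\pi t/\alpha}\bigl(e^{2\pi t/\alpha}-1\bigr)^{-2}$ has double poles at $t=i\alpha m$ and $\bigl(e^t-1\bigr)^{-1}$ has simple poles at $t=2\pi i\ell$; summing residues at $t=i\alpha k$ produces the $\bigl(\sin^2(k\alpha/2)\bigr)^{-1}$ terms, with the $\gamma_e+\log|\sin(k\alpha/2)|$ numerators arising from the double-pole (derivative) part, and the remaining contour contribution gives the $\cosh$--$\cos$ integral of \eqref{magic1-npij}. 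When $2\pi^2/\alpha\in\pi\Z$, equivalently $\alpha=2\pi/j$, the two pole lattices partially coincide, the $\cosh$--$\cos$ integral degenerates, and the merged poles reorganise into the separate $\gamma_e$--term and the pure logarithmic sum of \eqref{miracle}.

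The main obstacle I anticipate is the bookkeeping in the analytic continuation: the number of subtracted Debye terms must be chosen so that they are simultaneously enough to render the $n$--sum convergent at $s=0$ and exactly those whose $n$--sums are summable in closed form, and one must check that the $\log t$ coefficient of the heat trace cancels so that $\zeta_g$ is regular at $s=0$; pinning down the finite constants (the $\tfrac12(\gamma_e+2)$, the $\tfrac{5}{24\pi}\alpha$, and the $-\tfrac12\log(2\pi)$) requires carrying the expansions to the correct order. A secondary difficulty is matching the residue evaluation across the threshold $\alpha=2\pi/j$, where one must show that the colliding simple and double poles reproduce the $\gamma_e$ and logarithmic terms of \eqref{miracle} consistently with the limit $\alpha\to 2\pi/j$ of \eqref{magic1-npij}.
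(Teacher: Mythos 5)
Your diagonalization and your route to the two integral formulas are sound and essentially coincide with the paper's: the paper likewise separates variables, observes that the nonzero angular modes reproduce the Dirichlet sector spectrum with multiplicity two, and isolates the $J_0$ mode, writing $\zeta_{C_{2\alpha}}(s)=2\zeta_{S_\alpha}(s)+\xi_0(s)$ as in \eqref{zetac}, computing $\xi_0'(0)=-\tfrac12\log(2\pi)$ from the $\log I_0$ contour representation, and then obtaining both integral formulas by substituting $\alpha\mapsto\alpha/2$ in the sector result of Theorem \ref{t:sectors} via \eqref{eq:explicit_cone_det}, rather than re-running the Debye/Barnes-zeta analysis of \cite{bord96} directly on the cone as you propose (which would also work). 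Termwise differentiation in $\alpha$ is exactly the paper's step as well.

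The genuine gap is your derivation of the closed forms \eqref{magic1-npij} and \eqref{miracle}. First, the residue theorem does not apply to $\int_0^\infty(\cdots)\,dt$ as such: a half-line is not a closed contour, and the counterterms are subtracted only on $[0,1]$. Second, and more fundamentally, the integrand $\frac{(2\pi/\alpha^2)\,e^{2\pi t/\alpha}}{(e^{2\pi t/\alpha}-1)^2(e^t-1)}$ is meromorphic, and its residues are purely algebraic: using $\frac{e^x}{(e^x-1)^2}=\frac{1}{4\sinh^2(x/2)}$, the residue at the double pole $t=i\alpha k$ comes out to $\frac{1}{8\pi\sin^2(k\alpha/2)}$ — the ``derivative part'' of the double pole differentiates $1/(e^t-1)$ and is again trigonometric. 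No contour manipulation of this integrand can generate the transcendental numerators $\gamma_e+\log|\sin(k\alpha/2)|$, nor an integral over $\R$ carrying $\log(1+\cosh s)$: logarithms require a branch cut and $\gamma_e$ a regularization constant, and your contour picture contains neither. The paper's actual argument runs in the opposite direction and is the bulk of the work (\S\ref{subsectionmagic1}): the closed form is the independently proved variational formula of \cite{AldRow} (Theorem \ref{th:correctformula}, from conformal variation and Carslaw's heat kernel), simplified in Propositions \ref{prop:w_alpha} and \ref{prop:reformulate_sum_w_alpha}; its equality with the Bordag--Kirsten integral formula is then proved, for $\alpha\neq\pi/j$, by a branch-cut contour analysis in the $s$-plane of the $\log(1+\cosh s)$ integral (Lemmas \ref{le:residuesnolog} and \ref{le:logmanipulates}), in which the $t$-integrals arise as the two sides of the cut at $\Ima s=-\pi$, and, for $\alpha=\pi/j$, by a separate Hurwitz-zeta/digamma computation (\S\ref{s:anglespij}); the cone statements then follow by $\alpha\mapsto\alpha/2$. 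Note also that your plan to handle the threshold by letting colliding poles merge is not available: at cone angle $\alpha=2\pi/j$ the factor $\sin(2\pi^2/\alpha)$ vanishes while (for $j$ even) the $\cosh$--$\cos$ integral diverges, which is precisely why the paper treats these angles by an independent computation rather than by a limit of \eqref{magic1-npij}. To complete your proof you would need an argument of comparable substance for the identity between the integral and closed forms; summing residues of the $t$-integrand will not produce it.
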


\begin{remark} A formula for the zeta function and its derivative on a finite cone has been computed in \cite[Theorem 1]{Spre} by Spreafico using independent methods.  In the proof of Theorem \ref{cthm:detcone}, we show that this formula coincides with ours and is obtained virtually effortlessly from \cite{bord96}. 
\end{remark}

\subsection{Organization}
Theorems \ref{t:smoothcf-nb}, \ref{t:smoothcf-b}, and \ref{thm:domains} are proven in section \ref{s:smoothcf}. In \S \ref{s:explicit}, we obtain the explicit expression for the determinant on finite circular sectors and finite cones.  We then combine these expressions with results from the literature for the Barnes zeta function to obtain the first and second displayed equations in Theorems \ref{t:sectors} and \ref{cthm:detcone}.  The proofs of these theorems are completed by recalling the variational formula of \cite{AldRow} and manipulating that formula to obtain the simplified versions presented here.  In Appendix \ref{s:heatcalcapp}  we compute the contribution to the short time asymptotic expansion for the heat trace due to the conical singularities for surfaces with conical singularities.  The calculation agrees with that of \cite{cheeger}, but is obtained by a completely independent method.  In Appendix \ref{ss:vpcone}, we provide an alternative method for computing the angular variation of the determinant on a finite cone by considering a conformal change of metric in the style of \cite{AldRow}; this coincides with the expression obtained by the  independent method used to prove Theorem \ref{cthm:detcone}

\section*{Acknowledgements}
C.L. Aldana was partially supported by the Fonds National de la Recherche, Luxembourg 7926179.  JR \& CLA gratefully acknowledge  the National Science Foundation award DMS-1440140 which supported our time at the Mathematical Sciences Research Institute in Berkeley, California during the Fall 2019 semester.  JR is supported by Swedish Research Council Grant 2018-03873.  We are grateful to Stuart Dowker, Yilin Wang, and Eveliina Peltola for insightful discussions and correspondence, as well as the referee for constructive critiques that improved the quality of the paper.

\section{Polyakov formula for surfaces with conical singularities, boundary, and curvilinear polygonal domains}  \label{s:smoothcf}
Here we prove Theorems \ref{t:smoothcf-nb} and \ref{t:smoothcf-b}. That is, we prove a Polyakov formula for surfaces with isolated conical singularities and smooth boundary components, as given in Definition \ref{def:conicmet}.  We then obtain Theorem \ref{thm:domains} as an immediate consequence. Although we separated the cases with boundary and without boundary, the arguments are identical until the end at which point we distinguish between the two.  The proof of the Polyakov formulas presented in this section follows the same lines as the corresponding proofs in \cite{OPS}, \cite{Aldana-se} and \cite{AldRow}, but adapted to our case; for the details we refer to these references. Recall that on these surfaces, the set of conical singularities and the set of smooth boundary components must have empty intersection. The conformal factors considered here are assumed to be smooth all the way up to the conical points and up to the boundary.

Let $(\bM, g)$ be a surface with conical singularities and smooth boundary. Denote by $\Delta_g$ the Friedrichs extension of the Laplace operator associated to $g$. If there are boundary components, assume the Dirichlet boundary condition at all of them. Since in dimension two, the Friedrichs extension coincides with the Dirichlet extension \cite{domains}, we just refer to this extension as the Dirichlet extension or the Friedrichs extension.

If there are only conical points and $\bM$ has no boundary, then $\Delta_g$ has a non-trivial kernel, $\Ker(\Delta_g)$, which consists of constant functions.  If $\bM$ has at least one boundary component, then the kernel consists only of  the constant function zero and has dimension zero. In all cases, let $H_g(t, z, z')$ denote the heat kernel associated to $\Delta_{g}$. The heat kernel is the Schwartz kernel of the heat operator, $e^{-t \Delta_g}$.  The trace of the heat operator can be expressed as
\[ \tr(e^{-t{\Delta_g}}) = \int_\bM H_g(t, z, z) dA_g(z). \]
This expression can be used to express the spectral zeta function as
\begin{align*} \zeta_g (s) &= \frac{1}{\Gamma(s)} \int_0 ^\infty t^{s-1} \tr(e^{-t{\Delta_g}}-P_{\Ker(\Delta_g)}) dt \\
&= \frac{1}{\Gamma(s)} \int_0 ^\infty t^{s-1} \int_\bM \left( H_g(t, z, z) - \dim\Ker(\Delta_g) \right)  dA_g(z) \ dt,
\end{align*}
where $P_{\Ker(\Delta_g)}$ denotes the projection on the kernel of $\Delta_g$.

Let $\eps>0$, and let $\{ h_u = e^{2\varphi_u} g\}_{u\in (- \eps, \eps)}$ be a one-parameter family of metrics that are conformal to $g$. In this section, we assume that, for each $u$, $\varphi_u \in \cC^{\infty}(\bM)$; that is, $\varphi_u$ is a smooth function on $M$ that is smooth up to the cone points and the smooth boundary components. Moreover, we assume that the dependence on the parameter $u$ is also smooth. Under these conditions, $(\bM,h_{u})$ is a surface with conical singularities at the same points as $(\bM,g)$, and with the same conical angles at the corresponding points, since smooth conformal factors do not change angles as observed in \cite{AldRow}. 

The Dirichlet extensions of the corresponding Laplace operators satisfy
\[ \Delta_{h_u} = e^{-2\varphi_u} \Delta_g. \]
The area elements transform as
\[ dA_{h_u} = e^{2\varphi_u} dA_g,\]
and for the scalar curvatures we have 
\begin{equation} \label{eq:scalarcurvatures} \Scal_{h_u} = e^{-2\varphi_u} \big(\Scal_{g} + 2 \Delta_g \varphi_u \big). \end{equation}

To prove Polyakov's formula we will require heat kernel estimates.
\subsection{Heat kernel estimates} \label{hkest}
The heat kernel estimates we require follow easily  from \cite{davies}, \cite{acm}, and \cite{mooers}.

\begin{prop} \label{pr-heat2}
Let $(\bM,g)$ be a surface with conical singularities and smooth boundary as given in Definition \ref{def:conicmet}. Let $\{ \varphi_u (z) \}_{u\in (-\eps, \eps)}$ be a family of functions that are smooth on $(-\eps, \eps) \times \bM$ for some $\epsilon > 0$.  Let $T>0$, then there is a constant $C=C(T)>0$ such that the heat kernel, $H_u$, associated to the Dirichlet extension of the Laplacian on $(M, h_u = e^{2\varphi_u} g)$ satisfies the estimates
\begin{eqnarray*}
\left| H_u (t,z,z') \right| &\leq& \frac{C}{t},\\
\left| \pa_t H_u (t,z,z') \right| &\leq& \frac{C}{t^2},
\end{eqnarray*}
for all $z,z'\in \bM$, and all $t \in (0, T)$. These estimates are uniform for all $u \in [-\epsilon/2, \epsilon/2]$.
Moreover, these estimates also hold for the Dirichlet heat kernel on curvilinear polygonal domains in surfaces.
\end{prop}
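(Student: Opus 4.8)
The plan is to establish the two pointwise heat kernel bounds $|H_u(t,z,z')| \leq C/t$ and $|\partial_t H_u(t,z,z')| \leq C/t^2$ by combining a Gaussian-type off-diagonal upper bound with the on-diagonal short-time behavior, uniformly in the parameter $u$. The key reduction is that all the metrics $h_u = e^{2\varphi_u}g$ are uniformly comparable: since $\varphi_u$ is smooth on the compact set $[-\eps/2,\eps/2]\times\bM$, there are constants so that $e^{2\varphi_u}$ and its $u$-derivative are uniformly bounded above and below, which means the associated Dirichlet forms, volume measures, and hence the heat semigroups are all mutually comparable up to fixed constants. This lets me reduce the uniform-in-$u$ statement to a single estimate for the base metric, with the $u$-dependence absorbed into the constant $C(T)$.

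First I would invoke the Davies--Gaffney / ultracontractivity machinery from \cite{davies} together with the Nash-type inequality valid on these spaces. The crucial input is that surfaces with conical singularities of angle in $(0,2\pi)$ (and curvilinear polygonal domains) satisfy the same on-diagonal heat kernel bound as smooth surfaces in dimension two, namely $H_u(t,z,z) \leq C/t$ for small $t$; this follows because the relevant Sobolev/Nash inequality degenerates only mildly at the cone tip and the Friedrichs (=Dirichlet) extension is the one for which \cite{mooers} and \cite{acm} supply the required mapping properties and local parametrix near the singular points. Combining the on-diagonal bound with the spectral-theoretic Cauchy--Schwarz inequality $|H_u(t,z,z')| \leq H_u(t,z,z)^{1/2}H_u(t,z',z')^{1/2}$ immediately upgrades the diagonal bound to the full off-diagonal bound $|H_u(t,z,z')|\leq C/t$.

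For the time-derivative estimate I would use the semigroup identity $\partial_t H_u = -\Delta_{h_u} H_u$ together with the reproducing property $H_u(t,z,z') = \int H_u(t/2,z,w)H_u(t/2,w,z')\,dA_{h_u}(w)$, differentiated in $t$. Writing $\partial_t H_u(t,z,z') = \int \partial_t H_u(t/2,z,w)\,H_u(t/2,w,z')\,dA_{h_u}(w)$ and applying the analyticity of the heat semigroup on $L^2$ (which gives $\|\Delta_{h_u} e^{-t\Delta_{h_u}}\|_{L^2\to L^2} \leq C/t$) reduces matters to bounding $\partial_t H_u(s,\cdot,\cdot)$ in an $L^2$ sense against the already-controlled $H_u(s,\cdot,\cdot)$; the factor $1/t$ from analyticity composed with the $1/t$ from the on-diagonal bound yields the claimed $1/t^2$. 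Uniformity in $u$ is preserved throughout because every constant arises from the uniform metric comparability established at the outset, and the construction in \cite{nrs} supplies the same parametrix structure for curvilinear polygonal domains, giving the final sentence.

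\emph{The main obstacle} I anticipate is verifying that the constants are genuinely uniform in $u$ near the conical points and corners, rather than merely for each fixed $u$. Away from the singularities this is standard interior parabolic regularity, but near a cone tip the heat kernel's behavior is governed by the special-function expansions (Bessel functions on the model cone) whose implied constants depend on the local geometry encoded in $\omega_i(r)$ and on $\varphi_u$; I would handle this by fixing the model cone parametrix from \cite{mooers} and \cite{nrs} and treating the $u$-dependent conformal factor as a uniformly-bounded multiplicative and lower-order perturbation, so that Duhamel's principle propagates the uniform constant. This is precisely the point where one must cite that the microlocal heat kernel construction is stable under the smooth, uniformly-bounded family $\{\varphi_u\}$, which is where the hypotheses on smoothness of both $\varphi_u$ and $\partial_u\varphi_u$ up to the singular set are essential.
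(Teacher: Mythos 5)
Your proposal is correct, and it leans on the same external inputs as the paper --- \cite{acm} and \cite{mooers} for the behavior near the (exact and non-exact) conical points, and \cite{nrs} for curvilinear polygonal domains --- but it packages the functional analysis differently. The paper's proof is essentially citation-driven: it observes that $h_u = e^{2\varphi_u}g$ is again a metric of the same class (conical singularities, or a curvilinear polygonal domain), so the estimate (2.1) of \cite{acm} (extended to non-exact cones by Mooers' comparison argument) gives $|H_u| \leq C/t$ directly for each $u$, and then it invokes Davies \cite{davies}, through the argument already written out in \cite[Proposition 4]{AldRow}, for the bound $|\pa_t H_u| \leq C/t^2$; uniformity in $u$ comes from compactness of $[-\eps/2,\eps/2]$ and smoothness in $u$. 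You instead reduce everything to the base metric at the outset --- which in dimension two is particularly clean, since the Dirichlet energy is conformally invariant and the measures $dA_{h_u}=e^{2\varphi_u}dA_g$ are uniformly comparable, so the Nash inequality (hence the on-diagonal bound) transfers with uniform constants --- then upgrade on-diagonal to off-diagonal by the semigroup Cauchy--Schwarz inequality, and prove the time-derivative bound yourself by factoring $e^{-t\Delta}=e^{-(t/4)\Delta}e^{-(t/2)\Delta}e^{-(t/4)\Delta}$ and combining the spectral-theorem bound $\|\Delta e^{-s\Delta}\|_{L^2\to L^2}\leq C/s$ with $\|H(s,z,\cdot)\|_{L^2}^2=H(2s,z,z)\leq C/s$. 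That last argument is a correct, self-contained replacement for the citation of Davies, and your treatment of uniformity is arguably more transparent than the paper's. What the paper's route buys is brevity and the fact that it never needs to re-derive ultracontractivity; what your route buys is independence from \cite[Proposition 4]{AldRow} and an explicit mechanism for the uniformity in $u$.

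One phrase you should repair: the claim that the ``heat semigroups are all mutually comparable up to fixed constants'' is false if read as pointwise comparability of the kernels $H_u$ and $H_0$ --- no such domination holds in general between quasi-isometric metrics. What your argument actually uses, and what is true, is that the Nash/ultracontractivity inequality transfers between uniformly comparable Dirichlet forms and measures, so each $H_u$ satisfies the \emph{same type} of bound with a uniform constant. Stated that way, the step is sound; stated as comparability of the semigroups themselves, it is a gap. Likewise, the phrase ``degenerates only mildly at the cone tip'' carries no weight on its own --- the validity of the on-diagonal bound near the singular points is exactly the content of \cite{acm} (estimate (2.1)) and of Mooers' comparison with the exact model cone, so let those citations do that work explicitly, as the paper does.
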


\begin{proof}
Each conical singularity on $\bM$ has a neighborhood,
\[ \cN_i  \cong [0, \eps_i]_r \times \bS^1, \]
on which the metric $h_u = e^{2\varphi_u} g$ is of the form
\[ \left . h_u \right|_{\cN_i}  =  \left. e^{2\varphi_u} g \right|_{\cN_i} = e^{2\varphi_u} (dr^2 + r^2 \omega_i(r)). \]
Consequently, $(\bM, e^{2\varphi_u} g)$ is of the same type, in the sense that it has isolated conical singularities and has smooth boundary components whenever $M$ does. Moreover, if $(\bM, g)$ is a curvilinear polygonal domain contained in a smooth surface, then the same considerations imply that $(\bM, e^{2 \varphi_u} g)$ is also.  When the conical singularities are of the exact type, $(\bM, e^{2\varphi_u} g)$ is an
example of the much more general stratified spaces considered in \cite{acm}.  The Dirichlet heat kernels, $H_u$, therefore satisfy the estimate (2.1) on p. 1062 of \cite{acm}.  This estimate is
\begin{equation} \label{hke1} H_u(t, z, z') \leq C t^{-1}, \quad \forall z, z'\in \bM, \quad \forall t \in (0, 1), \end{equation}
since the dimension $n=2$.   When the conical singularities are not exact,  the calculations of Mooers, \cite[p. 13]{mooers} show that we obtain the same estimate.  There, she obtained estimates comparing heat kernels with general conical singularities to model heat kernels which have exact conical singularities.

Next, we apply the results by E.B. Davies in \cite{davies} which hold for the Laplacian on a general Riemannian manifold whose balls are compact if the radius is sufficiently small. These minimal hypotheses are satisfied for surfaces with generalized isolated conical singularities.  This estimate was demonstrated for sectors in \cite[Proposition 4]{AldRow}, and the exact same argument may be copy-pasted here so we simply summarize the result:
\[ \left| \pa_t H_u(t,z,z') \right| \leq C t^{-2}, \quad \forall t \in (0,T), \quad \forall z, z' \in  \bM. \]
Since the conformal factors $\varphi_u(z)$ are smooth for $u \in (-\epsilon, \epsilon)$ and for $z \in \bM$, by the compactness of $[-\epsilon/2, \epsilon/2]$ we obtain the uniformity of all estimates obtained here for $u \in [-\epsilon/2, \epsilon/2]$.
\end{proof}

\begin{remark}
It is well known that the estimate for the time derivative of the heat kernel implies the following estimate for the Laplacian of the heat kernel
\[ \left| \Delta_u H_u(t,z,z') \right| \leq  C t^{-2}, \]
for any $0<t<T$, and $z, z' \in  \bM$, for a constant $C>0$ depending on $T$.
\end{remark}

In addition to the estimates on the heat kernel mentioned above, we need the existence of an asymptotic expansion of the trace of the heat operator for small times. When $\bM$ has no boundary, $\pa \bM =\emptyset$, but $\bM$ still has isolated conical singularities, this follows from the results in \cite{mooers}; see also \cite{les}, \cite{seeley}, \cite{Sher}. When $\pa \bM \neq \emptyset$, and $\bM$ does not have any conical singularities, the existence of the asymptotic expansion of the heat trace for small times is well known, \cite{Kac}, \cite{alv}, \cite{Gilkey}.
Since the asymptotic expansion of the heat kernel can be found from those of the local models by using a parametrix construction, it is not problematic to obtain the expansion from the corresponding theorems in the references listed above.  However, since we allow in addition to conical singularities and smooth boundary components, curvilinear polygons, we refer for the existence of such an expansion to Theorem 5.5. in \cite{nrs} that covers the most general case.

\begin{cor} \label{cor:exp1} 
Let $(\bM,g)$ be a surface with finitely many conical singularities and smooth boundary $\pa \bM$. The heat operator associated to a metric $h = e^{2\varphi}  g$ conformal to $g$ with $\varphi\in \cC^{\infty}(\overline{\bM})$
is trace class for all $t>0$, and the trace has an expansion of the form
\[ \Tr_{L^2 (\bM, h)} \Big(e^{-t \Delta_{h} }  -P_{\Ker(\Delta_{h})} \Big) \sim
a_0t^{-1} + a_1 t^{-\frac 1 2} + a_{2,1} + O(t^{\frac 1 2} \log t), \quad t \downarrow 0. \]
Let $\psi\in \cC^{\infty} (\overline{\bM})$, and let $\cM_{\psi}$ denote the operator multiplication by $\psi$, then as $t \downarrow 0$, the trace of the operator, $\cM_{\psi}\Big(e^{-t \Delta_{h} }  -P_{\Ker(\Delta_{h})} \Big),$
has the following expansion
\[ \Tr_{L^2 (\bM, h)} \Big(\psi \big(e^{-t \Delta_h }  -P_{\Ker(\Delta_h)}\big) \Big) \sim   a_0(\psi)  t^{-1} + a_1(\psi) t^{-\frac 1 2} + a_{2,1}(\psi) \] 
\[ + O(t^{\frac 1 2} \log(t) ), \quad t \downarrow 0. \] 
\end{cor}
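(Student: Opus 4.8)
The plan is to establish the three assertions in turn — trace class for all $t>0$, the unweighted expansion, and the weighted expansion — deriving the last one, which is the only part requiring genuine work, from the local structure of the heat kernel that underlies Proposition \ref{pr-heat2} and the cited expansion theorem.

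First I would settle the trace class property using the pointwise bound $|H_h(t,z,z')|\le C/t$ from Proposition \ref{pr-heat2} together with the semigroup property. Since $\bM$ has finite $h$-area, this bound gives $H_h(t/2,\cdot,\cdot)\in L^2(\bM\times\bM)$, so $e^{-(t/2)\Delta_h}$ is Hilbert--Schmidt; hence $e^{-t\Delta_h}=\big(e^{-(t/2)\Delta_h}\big)^2$ is trace class for every $t>0$. As $\cM_\psi$ is bounded and $P_{\Ker(\Delta_h)}$ is finite rank, the operators $e^{-t\Delta_h}-P_{\Ker(\Delta_h)}$ and $\cM_\psi\big(e^{-t\Delta_h}-P_{\Ker(\Delta_h)}\big)$ are trace class as well, with traces $\int_\bM H_h(t,z,z)\,dA_h-\dim\Ker(\Delta_h)$ and $\int_\bM \psi(z)H_h(t,z,z)\,dA_h-\Tr\big(\cM_\psi P_{\Ker(\Delta_h)}\big)$, the subtracted term being finite rank and vanishing when $\pa\bM\neq\emptyset$. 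The unweighted expansion is then the special case $\psi\equiv 1$ and follows directly from Theorem 5.5 of \cite{nrs}, with the conical contributions as in \cite{mooers} and \cite{cheeger}; the absence of a $t^0\log t$ term is precisely the vanishing of the $\log t$ coefficient established in \cite{King} and \cite{nrs}.

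The main step is the weighted expansion, and here I would exploit that the expansion in \cite{nrs} is produced by a \emph{local} parametrix: the heat kernel is approximated, on the interior, near each smooth boundary arc, and near each cone point, by an explicit model kernel, with the errors controlled in trace norm. Inserting a partition of unity subordinate to this decomposition into $\Tr\big(\cM_\psi(e^{-t\Delta_h}-P)\big)=\int_\bM \psi(z)H_h(t,z,z)\,dA_h-\Tr(\cM_\psi P)$, each piece integrates $\psi$ against the corresponding model diagonal. The interior and boundary pieces reproduce the previous expansions with integrands weighted by $\psi$, giving the $\psi$-dependent coefficients $a_0(\psi)$, $a_1(\psi)$ and the interior and boundary parts of $a_{2,1}(\psi)$. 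For each cone point $p_i$ I would write $\psi(z)=\psi(p_i)+\big(\psi(z)-\psi(p_i)\big)$: the constant term $\psi(p_i)$ multiplies the unweighted cone contribution, and since that contribution carries no $\log t$ at order $t^0$, the coefficient $a_{2,0}(\psi)$ vanishes too. This leaves the remaining piece, weighted by $\psi(z)-\psi(p_i)=O(\dist_h(z,p_i))$, as the crux.

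The hard part will be controlling this last piece: I must show that integrating the vanishing factor $\psi(z)-\psi(p_i)$ against the singular part of the cone heat diagonal yields only terms that are $O(t^{1/2}\log t)$, so that it is absorbed into the stated remainder. I expect this to follow from the self-similar scaling of the model cone heat kernel: at distance $r$ from the tip the diagonal concentrates at scale $r\sim\sqrt t$, so the extra factor $O(r)$ contributes an additional power $t^{1/2}$, with a borderline logarithmic factor accounting for the $\log t$ in the remainder. Making this rigorous amounts to revisiting the cone heat-trace computation, as carried out in Appendix \ref{s:heatcalcapp} and in \cite{cheeger}, with the weight $\psi(z)-\psi(p_i)$ inserted and estimating the resulting integral via the explicit Bessel-function and contour representation of the cone heat kernel.
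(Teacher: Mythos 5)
Your proposal is correct in outline, but it takes a genuinely different route from the paper, and it is worth seeing what each approach costs and buys. The paper's proof of the weighted expansion is essentially structural: it invokes \cite[Theorem 4.1]{nrs}, which exhibits $H_h$ as a polyhomogeneous conormal distribution on the blown-up heat space, observes that any $\psi \in \cC^{\infty}(\ovM)$ lifts to a smooth, $t$-independent function on that space, and concludes that multiplying the kernel by the lift of $\psi$ merely absorbs $\psi$ into the polyhomogeneous coefficient functions without altering any exponents or powers of $\log$; integrating along the diagonal then gives an expansion with exactly the same powers of $t$ and $\psi$-dependent coefficients. Your route --- partition of unity, Taylor splitting $\psi = \psi(p_i) + O(r)$ at each cone point, and a scaling estimate showing that the $O(r)$-weighted singular part of the cone diagonal gains a factor of $t^{1/2}$ --- is the hands-on version of the same fact, and the mechanism you identify is the right one: on the exact cone the diagonal satisfies $H(t,z,z) = t^{-1} H\bigl(1, z/\sqrt{t}, z/\sqrt{t}\bigr)$, the difference with the Euclidean diagonal decays rapidly in $|z|/\sqrt{t}$, and the substitution $u = r/\sqrt{t}$ converts the extra factor of $r$ into the extra factor of $\sqrt{t}$. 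What the paper's approach buys is that this estimate, together with all the corrections you would still need to control --- the non-exactness of the cones permitted by Definition \ref{def:conicmet}, the conformal factor $e^{2\varphi}$, and their interaction with the exact model --- is packaged once and for all in the polyhomogeneity statement; your Bessel-function and contour computation would establish the crux estimate only for the exact flat model, and transferring it to the actual metric would force you to re-prove model-comparison bounds in the style of \cite{mooers} and \cite{nrs}, i.e., a substantial part of the machinery you were hoping to bypass. What your approach buys is transparency about where the gain of $t^{1/2}$ comes from, plus a clean, self-contained trace-class argument (Hilbert--Schmidt at time $t/2$, square to get trace class, then boundedness of $\cM_{\psi}$ and finite rank of $P_{\Ker(\Delta_h)}$), a point the paper's proof leaves implicit.
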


\begin{proof}
The existence of the short time expansions can be shown in a few different ways.  For surfaces with isolated exact flat conical singularities, the first result of which we are aware is due to Fursaev, \cite[Eqn 2.2]{Fursaev}.  Later but in a more general setting, we see that the result follows from Mooers \cite[Thm 3.1]{mooers} in the case of isolated conical singularities but no smooth boundary components.  Allowing both smooth boundary components and isolated conical singularities, the first statement of this  corollary is an immediate consequence of \cite[Theorem 5.5]{nrs}.

The second statement is also an almost immediate consequence of Theorem 4.1 in  \cite{nrs}. The heat kernel, $H_h$ for the heat operator $e^{- t \Delta_h}$ is shown to be a polyhomogeneous conormal distribution on the heat space, which is a manifold with corners created by blowing up $\bM \times \bM \times [0,1)$.  For the details, we refer to \cite{nrs}; see also \cite[\S 3]{AldRow}.  Here we provide just those details needed to prove the corollary.  For a boundary face defined by the function $x$, we say that a function $w$ is polyhomogeneous up to the boundary face if it admits an expansion near $x=0$ of the form
\[ w \sim \sum_{ \Rea s_j \to \infty} \sum_{p=0} ^{p_j} x^{s_j} (\log x)^p a_{j,p} (x,y), \quad a_{j,p} \in \cC^\infty. \]
Above, $\{s_j\}_{j \in \N} \subset \C$, whereas the second sum is over a finite set of non-negative integers for each $j$.  The heat space has various boundary faces at $t=0$.  Any function which is smooth on $\bM$ and smooth up to the cone points as well as the smooth boundary components lifts to the heat space to also be a smooth function which is independent of time.  The heat kernel $H_h$ is polyhomogeneous, and the powers of $t$ in its polyhomogeneous expansion at the boundary faces at $t=0$ were computed in \cite[Theorem 4.1]{nrs}.  Consequently, the product of the lift of $\psi$ as in the statement of this corollary with the heat kernel $H_h$ is polyhomogeneous conormal on the heat space.  Moreover, the expansions at the $t=0$ boundary faces simply absorb $\psi$ into the coefficient functions, $a_{j,p}$, because $\psi$ is independent of $t$.  In particular, the exponents and the powers of log are unchanged by $\psi$.  Hence, when one takes the trace by integrating along the diagonal, the powers of $t$ in the expansion are completely determined by $H_h$, independent of $\psi$.  Consequently, by \cite{nrs} (see also \cite{cheeger}, \cite{Sul}),
the trace
\[ \Tr_{L^2 (\bM, h)} \Big(\psi  \big(e^{-t \Delta_{h} }  -P_{\Ker(\Delta_{h})}\big) \Big) \sim
a_0(\psi) t^{-1} + a_1(\psi) t^{-\frac 1 2} + a_{2,1}(\psi) \] 
\[ + O(t^{\frac 1 2} \log(t)), \quad t \downarrow 0. \]
\end{proof}

Note that this coincides with the results in \cite{Sul} where metrics of the form $dr^2 + f(r)^2d\theta^2 $ are considered.

\subsection{Differentiating the family of zeta functions}  \label{s:formula1}
Let $(\bM, g)$ be a surface with conical singularities. For the moment we do not specify whether or not $\pa \bM$ is empty.
We consider the family of spectral zeta functions $\{\zeta_{h_u}\}_u$ associated to the family of metrics $\{h_u = e^{2\varphi_u} g\}_u$ on $\bM$. Here, $\{\varphi_u\}_{u}$ is a family of smooth functions on $\bM$ that are smooth up to the cone points and the smooth boundary components. Moreover, we assume that the dependence on the parameter $u$ is analytical. We are particularly interested in the case when $\varphi_u = \varphi_0 + u \psi$, for functions $\varphi_0$ and $\psi$ that are smooth on $\bM$ up to the cone points and the smooth boundary components.  Then we recall that the spectral zeta function satisfies 
\beq \zeta_{{h_u}}(s) = \frac{1}{\Gamma(s)} \int_0 ^\infty  t^{s-1} \tr \big(e^{-t\Delta_{h_u}}-P_{\Ker(\Delta_{h_u})}\big) \, dt. \label{eq:zeta_Gamma_heat} \eeq 

In order to compute the variation of the determinant, we should differentiate the family $\det(\Delta_{h_u})$ with respect to $u$ and evaluate at $u=0$. Recall that the determinant is the derivative of the meromorphic extension of the spectral zeta function at $s=0$. 
Therefore, we have a function of two variables, $\zeta(u,s) = \zeta_{{h_u}}(s)$. To obtain the local aspect of the variation of the determinant, the connection with the heat invariants, and Polyakov's formula with this method, we should perform an exchange in the order of differentiation. Hence, we need to use  
\beq \left. \frac{\pa \ } {\pa u} \left. \frac{\pa \  }{\pa s} \zeta(u,s) \right| _{s=0}  \right| _{u=0}  = \left. \frac{\pa \  }{\pa s} \left. \frac{\pa \  }{\pa u} \zeta (u,s) \right|_{u=0} \right|_{s=0}. \label{eq:swap_limits} \eeq 
As a matter of fact, using the asymptotic expansion of Corollary \ref{cor:exp1}, we prove that for each $u$ fixed, $\zeta(u,s)$ has a meromorphic continuation that is regular at $s=0$.  
Observe that the spectral zeta functions given in \eqref{eq:zeta_Gamma_heat} converge on the half plane $\Rea(s)>1$. Moreover, $\Gamma^{-1}(s)$ is an entire function with simple zeros at the non-positive integers.  The integrand in \eqref{eq:zeta_Gamma_heat} converges absolutely and uniformly on compact subsets of $\Rea(s) >1$.  The meromorphic extension of the zeta function is obtained by splitting the integral into two parts:  $\int_0 ^\delta + \int_\delta ^\infty$ for any choice of $\delta >0$.  The term $\int_\delta ^\infty$ is an entire function of $s$.  To meromorphically extend the term $\int_0 ^\delta$, the expansion obtained in Corollary \ref{cor:exp1} is inserted into the integral $\int_0 ^\delta$.  Multiplication with the factor $\Gamma^{-1}(s)$ then results in a quantity that is analytic about $s=0$.  Therefore $\zeta(u,s)$ is analytic as function of $s$ in a neighbourhood of $s=0$, uniformly and {\bf analytically} in $u$ in a compact neighbourhood of $u=0$.  We note that this meromorphic extension is independent of the choice of the parameter $\delta > 0$.  Moreover, the estimates on the heat kernels given in Proposition \ref{pr-heat2} are uniform in $u$, because all  the conformal factors considered are smooth up to the boundary and up to the conical singularities, therefore $\zeta(u,s)$ in analytic in $u$, and we can extend $\zeta(u,s)$ analytically as a function of $u\in \C$ in a small neighbourhood of $u=0$. Thus, 
$\left. \frac{\pa \  }{\pa s} \zeta(u,s) \right| _{s=0}$ is also analytic in $u$. Therefore, by classical complex analysis arguments, for example those in \cite{Huybrechts, DeBruijn}, we can exchange the order of the derivatives.  We note that this is the same argument used to define the determinant of the Laplacian through the zeta regularization process that evoques the properties of the heat trace and its asymptotic expansions. This is also the same idea used to prove variational Polyakov formulas in other cases, including closed surfaces and surfaces with hyperbolic ends as in several references including but not limited to \cite{osgo88-80-212}, \cite{OPS}, \cite{AAR}, and \cite{Aldana-se}.

Now, having justified the exchange of limits in \eqref{eq:swap_limits}, we start by differentiating the family of zeta functions with respect to the parameter $u$:
\[  \frac{\pa \ }{\pa u} \zeta_{h_u}(s) =  \frac{1}{\Gamma(s)} \int_0 ^\infty  t^{s-1} \frac{\pa }{\pa u} \tr_{L^2(\bM,h_u)} \big(e^{-t\Delta_{h_u}}-P_{\Ker(\Delta_{h_u})} \big) \, dt. \]
Here, $L^2 (\bM, h_u)$ indicates that the trace is taken on $\bM$ with respect to the area element $dA_{h_u}$.

Let us notice that for each $u$, the domain of the Dirichlet Laplacian $\Delta_{h_u}$ is equivalent to the domain of $\Delta_{h_0}$ by a bounded isometry with bounded inverse. This follows from
the fact that the conformal factors $\varphi_u \in \cC^{\infty}(\bM)$.
Since, in addition, the kernel, $\Ker(\Delta_{h_u})$, is the same for all $u$ we differentiate the heat trace as follows:
\begin{multline*}
\left. \frac{\pa}{\pa u} \tr_{L^2(\bM,h_u)} \big(e^{-t\Delta_{h_u}}-P_{\Ker(\Delta_{h_u})} \big)\right|_{u=0} =
\left. \frac{\pa}{\pa u} \tr_{L^2(\bM,h_u)} \big(e^{-t\Delta_{h_u}} \big)\right|_{u=0} \\
= - t \  \Tr_{L^2(\bM,h_0)} \Big( \big(\partial_u  \Delta_{h_u}|_{u=0}\big) e^{-t\Delta_{h_0}} \Big)
= 2t \ \Tr_{L^2 (\bM, h_0)} \left( (\pa_u \varphi_u |_{u=0})  \Delta_{h_0} e^{-t \Delta_{h_0}} \right).
\end{multline*}
On the other hand we have
\begin{multline*}
\frac{\partial}{\partial t} \Tr_{L^2 (\bM, h_0)} \Big(
(\pa_u \varphi_u |_{u=0}) \big(e^{-t \Delta_{h_0} }  -P_{\Ker(\Delta_{h_0})}\big) \Big) \\= \frac{ \partial}{\partial t} \Tr_{L^2 (\bM, h_0)} ( (\pa_u \varphi_u |_{u=0}) e^{-t \Delta_{h_0}} ) \\
= - \Tr_{L^2 (\bM, h_0)} ( (\pa_u \varphi_u |_{u=0}) \Delta_{h_0} e^{-t \Delta_{h_0}} ).
\end{multline*}
Therefore
\begin{multline}
\left. \frac{\pa}{\pa u} \tr_{L^2(\bM,h_u)} \big(e^{-t\Delta_{h_u}}-P_{\Ker(\Delta_{h_u})} \big)\right|_{u=0}\\
 = -2t\ \frac{\partial}{\partial t} \Tr_{L^2 (\bM, h_0)} \Big(
(\pa_u \varphi_u |_{u=0}) \big(e^{-t \Delta_{h_0} }  -P_{\Ker(\Delta_{h_0})}\big) \Big).
\end{multline}

Putting these equations together and integrating by parts, one obtains
\beq 
\left. \frac{\pa \ }{\pa u} \zeta_{h_u}(s) \right|_{u=0} &=& -\frac{1}{\Gamma(s)} \int_0 ^\infty  t^{s}\ \frac{\partial}{\partial t} \Tr_{L^2 (\bM, h_0)} \Big(
2 (\pa_u \varphi_u |_{u=0}) \big(e^{-t \Delta_{h_0} }  -P_{\Ker(\Delta_{h_0})}\big) \Big)    \, dt  \nn \\ 
&=& \frac{s}{\Gamma(s)} \int_0 ^\infty  t^{s-1} \Tr_{L^2 (\bM, h_0)} \Big( 2
(\pa_u \varphi_u |_{u=0}) \big(e^{-t \Delta_{h_0} }  -P_{\Ker(\Delta_{h_0})}\big) \Big)    \, dt . \label{e:putting}
\eeq

The deduction above is possible thanks to the trace class property of the operators involved and, again, due to the smoothness of the conformal factors.  Let us denote by
\[ \dot \varphi_0 := \pa_u \varphi_u |_{u=0}. \]
One key point is that the operator ${\cM}_{2 \dot \varphi_0} \big(e^{-t \Delta_{h_0} }  -P_{\Ker(\Delta_{h_0})}\big)$ is trace class and its trace admits an asymptotic expansion as $t \downarrow 0$.
Furthermore, due to the spectral gap of $\Delta_{h_0}$ at zero, the trace of this operator decays as $O(e^{-c/t})$ for some $c>0$ as $t \uparrow \infty$.

Moreover, we also have estimates on the heat kernels demonstrated in \S \ref{hkest}.   Then, to compute the variation of $- \log \det \Delta_g$, one differentiates equation (\ref{e:putting}) with respect to $s$ and sets $s=0$.  The integral with respect to $t$ is split into the corresponding integrals over $[0,1]$ and $[1,\infty)$. The second term yields an analytic function that does not contribute to the Polyakov formula, due to the factor of $s/\Gamma(s)$.  The first term is computed using the short time asymptotic expansion of the trace.

By Corollary \ref{cor:exp1}, as $t \downarrow 0$, there is an expansion of the form
\[ \Tr_{L^2 (M, h_0)} \Big(2 \dot \varphi_0 (z) \big(e^{-t \Delta_{h_0} }  -P_{\Ker(\Delta_{h_0})}\big) \Big) \sim
a_0 t^{-1} + a_1 t^{-\frac 1 2} + a_{2,1} + O(t^{\frac 1 2} \log(t) ).\]
Then, by \eqref{e:putting} we have that
\[ \left . \frac{\pa}{\pa u} \left( - \log \det (\Delta_{h_u})\right) \right|_{u=0}  = a_{2,1}. \]
Now, we use the heat kernel and heat trace calculations to compute this term.  The heat kernel restricted to the diagonal has an expansion for small values of $t$ in powers of $t^{-1/2}$.  For any $z\in M$, at any positive distance from the boundary and the conical singularities, one has
\[ H_0 (t, z, z) \sim \frac{1}{4\pi t} + \frac{\Scal_{h_0} (z)}{24 \pi} + \mathcal O (t^{1/2} \log(t)). \]
Moreover, the Schwartz kernel of $P_{\Ker(\Delta_{h_0})}$ is simply $\frac{1}{A_{h_0}}$, where $A_{h_0}$ denotes the area of $(\bM, h_0)$.  Consequently, in case there are no smooth boundary components, the kernel of $\Delta_{h_0}$ consists of the constant functions, and so in this case the contribution to $a_{2,1}$ from the interior $M$ is:
\[ \int_{M} 2 \dot \varphi_0(z) \left(\frac{\Scal_{h_0}(z)}{24 \pi} -\frac{1}{A_{h_0}}\right) dA_{h_0}. \]

Next, we compute the contribution from the conical singularities using the construction of the heat kernel.  For this we lift the product of the heat kernel restricted to the diagonal and $\dot \varphi_0$ to the single heat space constructed in \cite[\S 3.2]{nrs}.  In this construction, the conical singularities are `blown up', being replaced by boundary faces.  Consequently, since the conformal factor is smooth up to the cone point, on the single heat space $2 \dot \varphi_0$ is constant along each of these blown up faces, being equal to its value at the cone point.  The leading order behavior of the heat kernel at these faces is that of a heat kernel for an exact infinite cone. The contribution to the $t^0$ term in the trace is therefore the product of the corresponding term (the $t^0$ coefficient)  in the expansion of the trace of a truncated exact cone and the value of $2 \dot \varphi_0$ at the cone point.  For a cone angle $\gamma$ at the cone point $p$, this contribution has been computed in \cite[4.42]{cheeger}.  We present an independent calculation of this contribution in Appendix  \ref{s:heatcalcapp}
which coincides with Cheeger's, so that together with the conformal factor we obtain
\[ 2 \dot \varphi_0 (p) \frac{(2\pi)^2 - \gamma^2}{24 \pi \gamma}. \]
Putting all terms together, when there are no smooth boundary components we have the differentiated Polyakov formula:
\begin{multline} \left . \frac{\pa}{\pa u} \left(- \log \det(\Delta_{h_u})\right)\right|_{u=0} =  \int_{M} 2 \dot \varphi_0(z) \left(\frac{\Scal_{h_0}(z)}{24 \pi} -\frac{1}{A_{h_0}}\right) dA_{h_0} \\ + \sum_{i=1} ^m 2 \dot \varphi_0 (p_i) \frac{(2\pi)^2 - \gamma_i^2}{24 \pi \gamma_i}.  \end{multline}
Above, the sum is over the $m$ isolated cone points, $p_1, \ldots, p_m$ with corresponding angles $\gamma_1, \ldots, \gamma_m$.

In order to have an integrated Polyakov formulas we assume that the conformal factors are of the form
\begin{equation} \label{eq:cf-integrated} \varphi_u = \varphi_0 + u \eta, \quad \eta \in \cC^\infty(\ovM) = \cC^\infty(\bM), \end{equation} 
and therefore
\[ h_0 = e^{2\varphi_0} g, \quad \text{ and, } \quad  \dot \varphi_0 = \eta.\]

Recall that the scalar curvatures of metrics $h_0$ and $g$ are related by
\[ \Scal_{h_0}(z) = e^{-2\varphi_0} \big(\Scal_{g}(z) + 2 \Delta_g \varphi_0 \big). \]
Then, we compute in the case where $\bM$ has only cone points, but no smooth boundary components, $\pa \bM = \emptyset$,
\[
 \left. \frac{\partial}{\partial u} \int_{M} \vert \nabla_{g}(\varphi_0 + u \eta) \vert^{2} \ dA_{g} \right\vert _{u=0} = 2 \int_M \big(\nabla_g \eta \big)  \cdot \big(\nabla_g \varphi_0 \big) dA_g\]
 \[ = 2 \int_M \eta \ \Delta_g \varphi_0 dA_g,\]
 by the sign convention chosen for the Laplace operator \eqref{laplacesign}.  Moreover, we also compute
\begin{eqnarray*}
\left.{\frac{\partial}{\partial u}} \int_{M}(\varphi_0+u\eta) \Scal_{g}\ dA_{g}\right\vert _{u=0} &=& \int_{M}  \eta \Scal_{g}  \ dA_{g},\\
\left.{\frac{\partial}{\partial u}} \log(A_{h_u}) \right\vert _{u=0} &=&  \frac{1}{A_{h_0}} \int_{M} 2\eta (z) \ dA_{h_0}.
\end{eqnarray*}
Consequently in this case,
\[ \left . \frac{\pa}{\pa u} \left(- \log \det (\Delta_{h_u}) \right) \right|_{u=0} = \int_{M} 2 \eta \left(\frac{\Scal_{h_0}}{24 \pi} -\frac{1}{A_{h_0}}\right) dA_{h_0}+ \sum_{i=1} ^m 2 \eta (p_i) \frac{(2\pi)^2 - \gamma_i^2}{24 \pi \gamma_i}
\]

\[
= \frac{1}{12 \pi} \int_M \eta (e^{-2 \varphi_0} (\Scal_g  + 2 \Delta_g \varphi_0)) e^{2 \varphi_0} dA_g - \frac{1}{A_{h_0}} \int_M 2 \eta  dA_{h_0} + \sum_{i=1} ^m 2 \eta (p_i) \frac{(2\pi)^2 - \gamma_i^2}{24 \pi \gamma_i}
\]
\[ = \frac{1}{12 \pi} \int_M \eta \Scal_g dA_g + \frac{1}{12 \pi} \int_M 2 \eta \Delta_g \varphi_0 dA_g - \frac{1}{A_{h_0}} \int_M 2 \eta dA_{h_0} + \sum_{i=1} ^m 2 \eta (p_i) \frac{(2\pi)^2 - \gamma_i^2}{24 \pi \gamma_i}\]
\begin{multline*}
= \frac{\pa}{\pa u} \left . \left( \frac{1}{12 \pi}  \int_{M}(\varphi_0+u\eta) \Scal_{g}\ dA_{g} + \frac{1}{12 \pi}  \int_{M} \vert \nabla_{g}(\varphi_0 + u \eta) \vert^{2} \ dA_{g}\right) \right|_{u=0} \\
 + \frac{\pa}{\pa u} \left( \left . -  \log(A_{h_u}) + \sum_{i=1} ^m  2 (\varphi_0 + u \eta) (p_i)  \left(\frac{ (2\pi)^2 - \gamma_i ^2}{24 \pi \gamma_i}\right) \right) \right|_{u=0}.
\end{multline*}
Therefore,
\begin{multline}
- \log \det (\Delta_{h_0})
=  \frac{1}{12 \pi} \int_{M} \Scal_{g}\ \varphi_0 \ dA_{g}
+ \frac{1}{12\pi}  \int_{M} \vert \nabla_{g}\varphi_0 \vert^{2} \ dA_{g} \\
-  \log(A_{h_0})
+  \sum_{i=1} ^m 2 \varphi_0 (p_i) \left(\frac{ (2\pi)^2 - \gamma_i ^2}{24 \pi \gamma_i} \right)  + C.
\end{multline}
The constant is obtained by setting $\varphi_0 = 0$, which gives
\[ - \log \det(\Delta_g) = - \log A_g + C \implies C = - \log \det (\Delta_g) +  \log A_g.\]
Thus
\begin{multline}
 \log \det (\Delta_{g}) - \log \det (\Delta_{h_0})
=  \frac{1}{12 \pi} \int_{M} \Scal_{g}\ \varphi_0 \ dA_{g}
+ \frac{1}{12 \pi}  \int_{M} \vert \nabla_{g}\varphi_0 \vert^{2} \ dA_{g} \\
-  \log(A_{h_0}) +  \log(A_g)
+  \sum_{i=1} ^m 2 \varphi_0 (p_i) \left(\frac{ (2\pi)^2 - \gamma_i ^2}{24 \pi \gamma_i} \right).
\end{multline}
This proves theorem \ref{t:smoothcf-nb}. \\

Next we consider the case in which $\bM$ has smooth boundary components, $\pa \bM\neq 0$. We consider the Dirichlet Laplacian, then the kernel of $\Delta_{h_0}$ is trivial, and so in that case the contribution to $a_{2,1}$ from the interior is:
\[ \int_{M} 2 \dot \varphi_0  \left(\frac{\Scal_{h_0}}{24 \pi}\right) dA_{h_0}. \]
Near smooth boundary components, we use boundary normal coordinates $(x,y)$ such that the boundary is at $y=0$ and $x$ is the coordinate inside the boundary. In these coordinates the metric near a boundary component has the following form:
\[ \wt{\omega}(x,y) dx^2 + dy^2, \]
for some function $\wt{\omega}$ smooth in a neighborhood of the given boundary component and up to this component and $\wt{\omega}(x,0)=1$.  By \cite[Proposition 5.8]{nrs}, the heat kernel in this case, has leading order behavior given by \cite[eqn.(4.11)]{nrs}.  We may therefore use the model of the Dirichlet heat kernel in the Euclidean half-plane $\R^2_+$ to compute the contribution to the short time asymptotic expansion of the boundary components.
 On $\R^2_+$, for the edge at $y=0$ in Cartesian coordinates, the Dirichlet heat kernel  is given by,
\[ H_{\R^2_+}(t,(x,y),(x',y')) = \frac{1}{4\pi t} (e^{-\frac{(x-x')^2 + (y-y')^2}{4t}}  - e^{-\frac{(x - x')^2 + (y + y')^2}{4t}}).\]

Therefore in a small neighborhood of the boundary component, the heat kernel restricted to the diagonal has leading term
\[ \frac{1-e^{-y^2/t}}{4\pi t}. \]
The conformal factor is smooth up to the boundary and therefore has a Taylor expansion in powers of $y$.  So, near a smooth boundary component, for simplicity denoted by $\pa \bM$, we compute for $y_0>0$ small,
\[ \int_{\pa \bM} \int_{y=0} ^{y_0} 2 \dot \varphi_0 (x, y) \frac{1-e^{-y^2/t}}{4\pi t} dy dx. \]
Recall that we are looking for the contribution of the boundary to the constant coefficient $a_{2,1}$ in the asymptotic expansion of the trace of ${\cM}_{2 \dot \varphi_0}  e^{-t \Delta_{h_0} }$. The only possible contribution to this term comes from
\[ - \frac{1}{4\pi t} \int_{\pa \bM} \int_{0} ^{y_0} e^{-y^2/t} 2 \dot \varphi_0 (x,y) dy dx. \]
The Taylor expansion of $ \dot \varphi_0$ near $y=0$ is
\[ \dot \varphi_0 (x,y) \sim b_0 (x) + y b_1 (x) + R(x,y), \quad R(x,y) = \mathcal O (y^2). \]
We compute
\[ -\frac{2}{4\pi t} \int_{\pa \bM} b_0 (x) \int_{0} ^{y_0} e^{-y^2/t} dy dx = - \frac{2}{4\pi \sqrt t} \int_{\pa \bM} b_0 (x) \int_0 ^{y_0/\sqrt t} e^{-u^2} du dx \]
\[= - \frac{2 \sqrt \pi}{8 \pi \sqrt t} \int_{\pa \bM} b_0 (x) dx + \mathcal O(t^\infty). \]
Above we have used the substitution $u = y/\sqrt t$.  Consequently the first term in the Taylor expansion does not contribute to $a_{2,1}$. Next we compute
\begin{multline*}
- \frac{2}{4\pi t} \int_{\pa \bM} b_1 (x) \int_0 ^{y_0} y e^{-y^2/t} dy dx = \frac{-1}{4\pi} \int_{\pa \bM} b_1 (x) \int_0 ^{y_0 ^2/t} e^{-u} du\\ = \frac{1}{4\pi} \int_{\pa \bM} b_1 (x) dx + \mathcal O(t^\infty),\end{multline*}
where
\[ b_1 (x) = \frac{\pa \dot \varphi_0}{\pa y} (x, 0), \]
is the normal derivative of $\dot \varphi_0$ along the smooth boundary component. For the remainder term, a similar computation shows that it is $\mathcal O (\sqrt t)$ as $t \downarrow 0$. Consequently, switching from local coordinates to global coordinates at the boundary, the contribution from the smooth boundary components to the constant term $a_{2,1}$ is
\[ \frac{1}{4\pi} \int_{\pa \bM} \frac{\pa \dot \varphi_0}{\pa n_{h_0}} (x, 0) dx_{h_0}. \]
The notation $dx_{h_0}$ indicates that the integration along the boundary is with respect to the boundary measure induced by the Riemannian metric $h_0$, and similarly, the normal derivative along the boundary is with respect to the boundary measure induced by $h_0$.

There is also a contribution from the smooth boundary components due to the local expansion of the heat kernel for short times in terms of the curvatures.
The term in the expansion of the kernel is well known to be
\[ \frac{1}{12\pi}  k_{h_0} (x),\]
\noindent with $k_{h_0}$ being the geodesic curvature of the boundary with respect to the Riemannian metric $h_0$, \cite{ms}. Thus, recalling the conformal factor which is smooth at all boundary components, this gives a contribution:
\[ \frac{1}{12\pi} \int_{\pa \bM} 2 \dot \varphi_0 (x) k_{h_0} (x) dx_{h_0}. \]
Thus the total contribution from the boundary is
\[ \frac{1}{4\pi} \int_{\pa \bM} \frac{ \pa \dot \varphi_0}{\pa n_{h_0}} (x) dx_{h_0}+\frac{1}{12\pi} \int_{\pa \bM} 2 \dot \varphi_0 (x) k_{h_0} (x) dx_{h_0}. \]

Therefore, for a surface with conical singularities and at least one smooth boundary component, we obtain the variational Polyakov formula
\begin{multline} \left . \frac{\pa}{\pa u} \left(- \log \det(\Delta_{h_u}) \right)\right|_{u=0} =  \int_{M} 2 \dot \varphi_0(z) \frac{\Scal_{h_0}(z)}{24 \pi} dA_{h_0} \\ +\frac{1}{4\pi} \int_{\pa \bM} \frac{\pa \dot \varphi_0}{\pa n_{h_0}} (x) dx_{h_0}+\frac{1}{12\pi} \int_{\pa \bM} 2 \dot \varphi_0 (x) k_{h_0} (x) dx_{h_0}\\ + \sum_{i=1} ^m 2 \dot \varphi_0 (p_i) \frac{(2\pi)^2 - \gamma_i^2}{24 \pi \gamma_i}. \label{e:dpfcsasbc} \end{multline}

To obtain the integrated Polyakov formula, we assume that the conformal factors are given by \eqref{eq:cf-integrated}, and we note that the relationship between the corresponding curvatures is
\[ \Scal_{h_0} = e^{-2 \varphi_0} \left( \Scal_g + 2 \Delta_g \varphi_0\right), \quad k_{h_0} = e^{-\varphi_0} \left(k_{g} +  \frac{\partial \varphi_0}{\partial_{n_g}} \right).\]
We further have
\[ dx_{h_0} = e^{\varphi_0} dx_g, \quad \frac{\pa}{\pa n_{h_0}} = e^{-\varphi_0} \frac{\pa}{\pa n_g}.\]
Here $n_g$ denotes the unitary (with respect to $g$) outer normal vector at $\pa \bM$. With this, equation \eqref{e:dpfcsasbc} becomes
\beq \left . \frac{\pa}{\pa u} \left( -  \log \det(\Delta_{h_u}) \right) \right|_{u=0} &=&  \frac{1}{12\pi} \int_M \eta \Scal_g dA_g + \frac{1}{12\pi} \int_M 2 \eta \Delta_g \varphi_0 dA_g \nn \\
&+&\frac{1}{4\pi} \int_{\pa \bM} \frac{\pa \eta}{\pa n_g} dx_g +\frac{1}{12\pi} \int_{\pa \bM} 2 \eta \left( k_g + \frac{\pa \varphi_0}{\pa n_g} \right) dx_g \nn \\ 
&+& \sum_{i=1} ^m 2 \eta (p_i) \frac{(2\pi)^2 - \gamma_i^2}{24 \pi \gamma_i}.  \nn \eeq
Now we proceed in the same way as we did in the proof of the integrated formula for surfaces with conical singularities without smooth boundary above, but taking into account the terms coming from the boundary.   By the sign convention chosen for the Laplace operator \eqref{laplacesign} we have
\begin{multline*}
 \left. \frac{\partial}{\partial u} \int_{M} \vert \nabla_{g}(\varphi_0 + u \eta) \vert^{2} \ dA_{g} \right\vert _{u=0} = 2 \int_M \nabla_g \eta \cdot \nabla_g \varphi_0 dA_g\\
  = 2 \int_M \eta \Delta_g \varphi_0 dA_g + 2 \int_{\pa \bM} \eta \frac{\pa \varphi_0}{\pa n_g}(x) dx_g.\end{multline*}
  Since
 \[ \frac{\pa}{\pa u} \left . 2 \int_{\pa \bM} (\varphi_0 + u \eta) \frac{\pa \varphi_0}{\pa n_g} dx_g \right|_{u=0} = 2 \int_{\pa \bM} \eta \frac{\pa \varphi_0}{\pa n_g}(x) dx_g,\]
 we obtain
 \[  \frac{\partial}{\partial u}  \left . \left(\int_{M} \vert \nabla_{g}(\varphi_0 + u \eta) \vert^{2} \ dA_{g} -  2 \int_{\pa \bM} (\varphi_0 + u \eta) \frac{\pa \varphi_0}{\pa n_g} dx_g \right) \right|_{u=0} = 2 \int_M \eta \Delta_g \varphi_0 dA_g.\]

Using $\dot \varphi_0 = \eta$, we compute that
\[ \frac{1}{4\pi} \int_{\pa \bM} \frac{\pa \dot \varphi_0}{\pa n_{h_0}} (x) dx_{h_0} = \frac{1}{4\pi} \int_{\pa \bM} e^{-\varphi_0} \frac{\pa \dot \varphi_0}{\pa n_g} e^{\varphi_0} dx_g = \frac{1}{4\pi} \int_{\pa \bM} \frac{\pa \dot \varphi_0}{\pa n_g} dx_g = \frac{1}{4\pi} \int_{\pa \bM} \frac{\pa \eta}{\pa n_g} dx_g.\]
Hence
\[ \frac{\pa }{\pa u} \left. \frac{1}{4\pi} \int_{\pa \bM} \frac{\pa}{\pa n_g} (\varphi_0 + u \eta) dx_g \right|_{u=0} = \frac{1}{4\pi} \int_{\pa \bM} \frac{\pa \eta}{\pa n_g} dx_g.\]
In addition, we also have
\begin{multline*} \frac{\pa}{\pa u} \left . \frac{1}{12\pi} \int_{\pa \bM} 2 (\varphi_0 + u \eta) k_{h_0} dx_{h_0} \right|_{u=0} = \frac{1}{12\pi} \int_{M} 2 \eta k_{h_0} dx_{h_0} \\
= \frac{1}{12\pi} \int_{\pa \bM} 2 \eta e^{-\varphi_0} \left( k_g + \frac{\pa \varphi_0}{\pa n_g} \right) e^{\varphi_0} dx_g =  \frac{1}{12\pi} \int_{\pa \bM} 2 \eta \left( k_g + \frac{\pa \varphi_0}{\pa n_g} \right) dx_g.\end{multline*}

Combining all these equations together, we obtain

\begin{equation*}
\begin{split}
& \left . \frac{\pa}{\pa u} \left( -  \log \det(\Delta_{h_u}) \right) \right|_{u=0} = \left . \frac{\pa}{\pa u} \left( \frac{1}{12\pi}  \int_{M}(\varphi_0+u\eta) \Scal_{g} dA_{g} \right) \right|_{u=0} \\
&\phantom{ \log \det }
+ \frac{\pa}{\pa u} \left . \frac{1}{12\pi} \left(\int_{M} \vert \nabla_{g}(\varphi_0 + u \eta) \vert^{2} \ dA_{g} -  2 \int_{\pa \bM} (\varphi_0 + u \eta) \frac{\pa \varphi_0}{\pa n_g} dx_g \right) \right|_{u=0} \\
&\phantom{\log \det} + \frac{\pa }{\pa u} \left. \frac{1}{4\pi} \int_{\pa \bM} \frac{\pa}{\pa n_g} (\varphi_0 + u \eta) dx_g \right|_{u=0}  +  \frac{\pa}{\pa u} \left . \frac{1}{12\pi} \int_{\pa \bM} 2 (\varphi_0 + u \eta) \left( k_g + \frac{\pa \varphi_0}{\pa n_g} \right) dx_g\right|_{u=0} \\
&\phantom{\log \det } + \frac{\pa}{\pa u} \left .  \sum_{i=1} ^m 2 (\varphi_0 + u \eta) (p_i) \frac{(2\pi)^2 - \gamma_i^2}{24 \pi \gamma_i}\right|_{u=0}
 \end{split}
 \end{equation*}

\begin{multline*} = \left . \frac{\pa}{\pa u} \left( \frac{1}{12\pi}  \int_{M}(\varphi_0+u\eta) \Scal_{g} dA_{g} + \frac{1}{12\pi} \int_{M} \vert \nabla_{g}(\varphi_0 + u \eta) \vert^{2} \ dA_{g} \right) \right|_{u=0} \\
+ \frac{\pa}{\pa u} \left . \left(  \frac{1}{4\pi} \int_{\pa \bM} \frac{\pa}{\pa n_g} (\varphi_0 + u \eta) dx_g + \frac{1}{6\pi} \int_{\pa \bM} (\varphi_0 + u \eta) k_g  dx_g \right)\right|_{u=0} \\
 + \frac{\pa}{\pa u} \left .  \sum_{i=1} ^m 2 (\varphi_0 + u \eta) (p_i) \frac{(2\pi)^2 - \gamma_i^2}{24 \pi \gamma_i}\right|_{u=0} \end{multline*}

We therefore obtain
\begin{multline*}
 \log \det (\Delta_{h_0}) = -\frac{1}{12 \pi}  \int_{M} \varphi_0 \Scal_{g} dA_{g} -  \frac{1}{12\pi} \int_{M} \vert \nabla_{g} \varphi_0  \vert^{2} \ dA_{g}\\ -  \frac{1}{4\pi} \int_{\pa \bM} \frac{\pa \varphi_0}{\pa n_g}dx_g
  - \frac{1}{6\pi} \int_{\pa \bM}  \varphi_0 k_{g} dx_g - \sum_{i=1} ^m 2 \varphi_0 (p_i) \frac{(2\pi)^2 - \gamma_i^2}{24 \pi \gamma_i} + C.\end{multline*}
To determine the constant, we set $\varphi_0=0$, which gives
\[C = \log \det(\Delta_g).\]
Consequently, we obtain the formula relating the determinants
\begin{multline*}
\log \det (\Delta_{h_0}) - \log \det(\Delta_g)
 = -\frac{1}{12 \pi}  \int_{M} \varphi_0 \Scal_{g} dA_{g} -  \frac{1}{12\pi} \int_{M} \vert \nabla_{g} \varphi_0  \vert^{2} \ dA_{g} \\
 -  \frac{1}{4\pi} \int_{\pa \bM} \frac{\pa \varphi_0}{\pa n_g}dx_g
 - \frac{1}{6\pi} \int_{\pa \bM}  \varphi_0 k_{g} dx_g - \sum_{i=1} ^m 2 \varphi_0 (p_i) \frac{(2\pi)^2 - \gamma_i^2}{24 \pi \gamma_i}.
 \end{multline*}
This completes the proof of Theorem \ref{t:smoothcf-b}.  To complete the proof of Theorem \ref{thm:domains}, we note that the contribution from the corners, that is the sum, simply has a change from $(2\pi)^2$ in the numerator to $\pi^2$.
\qed


\section{The determinant on finite circular sectors and finite cones}  \label{s:explicit}
We use \cite{bord96} to compute the zeta regularized determinant of the Laplacian with Dirichlet boundary condition on circular sectors.  The eigenvalues of the sector $S_\alpha$ of opening angle $\alpha$ and radius $1$ with the Dirichlet boundary condition are 
\[ \lambda_{n,\ell}^2 = \textrm{ the square of the $n^{th}$ positive root of the Bessel function, $J_{\nu_\ell}$,} \]
with $\nu_\ell = \ell \pi /\alpha$.
This allows one to write the associated zeta function in terms of a contour integral as
\[  \zeta_{S_\alpha} (s) = \sum_{\ell =1}^\infty \sum_{n=1}^\infty \lambda_{n,\ell} ^{-2s}  = \sum_{\ell =1}^\infty \frac 1 {2\pi i} \int\limits_\gamma k^{-2s} \frac \partial {\partial k} \log J_{\nu_\ell} (k) dk, \]
where the contour $\gamma$ encloses all the (positive) eigenvalues in the right half-plane. This representation is valid for $\Rea s > 1$, but the analytical continuation has been constructed in detail (in greater generality) in \cite{bord96} (see also \cite{kkbook}), and we therefore omit the details here.  Subtracting and adding the suitable uniform asymptotics of the Bessel function,
in the notation of \cite[(3.3)]{bord96} this construction leads to the base zeta function and the Barnes type zeta function \cite[(9.5)]{bord96}, that are respectively,
\begin{equation} \label{sec4}
\zeta_{\cN} (s) = \sum_{\ell =1}^\infty \nu_\ell ^{-2s} = \left( \frac \pi \alpha\right)^{-2s} \zeta_R (2s), \quad
\zeta_{{\cN} +1} (z) = \sum_{n=1}^\infty \sum_{\ell =1}^\infty \left( \frac{\pi \ell} \alpha +n\right)^{-z}.
\end{equation}
We shall first use these zeta functions to compute $- \log \det (\Delta_{S_\alpha})$.
To do this, we shall also require  the polynomial $D_1 (t) = \frac 1 8 t - \frac 5 {24} t^3$.
With these quantities, we have \cite[(9.8)]{bord96}
\begin{equation} \label{sec6}
- \log \det (\Delta_{S_\alpha}) = \zeta ' _{S_\alpha} (0) = \zeta_{{\cN} +1} ' (0) + \log 2 \left( \zeta_{\cN} \left( - \frac 1 2 \right) + 2 \mbox{Res } \zeta_{\cN} \left( \frac 1 2 \right) D_1 (1) \right) \end{equation}
\[ + 2 \mbox{Res } \zeta_{\cN} \left( \frac 1 2\right) \,\, \int\limits_0^1 \frac{ D_1 (t) - t D_1 (1)} {t (1-t^2)} dt .\]
All quantities except $\zeta_{{\cN}+1} ' (0)$ are easily computed.  In particular, one finds
\beq
& & \zeta_{\cN} \left( - \frac 1 2 \right) = \frac \pi \alpha \zeta_R (-1) = - \frac \pi {12\alpha} , \quad \quad \mbox{Res } \zeta_{\cN} \left( \frac 1 2 \right) =
\frac \alpha {2\pi} \mbox{Res } \zeta _R (1) = \frac \alpha {2\pi}, \nn\\
& &D_1 (1) = - \frac 1 {12}, \quad \quad \int\limits_0^1 \frac{ D_1 (t) - t D_1 (1) } {t (1-t^2)} dt = \frac 5 {24},\nn
\eeq
and so
\beq
\zeta ' _{S_\alpha} (0) = \zeta_{{\cN} +1} ' (0) + \frac 5 {24} \frac \alpha \pi -\frac 1 {12} \log 2 \left( \frac \alpha {\pi} + \frac \pi { \alpha} \right) .\label{sec7}
\eeq
In order to compare our results with the literature, we rewrite $\zeta_{\cN +1} (z)$ in terms of the commonly used Barnes zeta function
\beq
\zeta_{\mathcal B} (z; a,b,x) = \sum_{m,n=0}^\infty \left( am+bn+x\right)^{-z}.\nn
\eeq 
Straightforward rewriting shows
\beq
\zeta_{\cN +1} (z) = \left( \frac \alpha \pi \right)^z \left( \zeta _{\mathcal B} \left( z; \frac \alpha \pi , 1,1\right) - \zeta_R (z) \right) \nn
\eeq
and thus
\beq
\zeta_{\cN +1} ' (0) &=& \log \left( \frac \alpha \pi \right) \left[ \zeta_{\mathcal B} \left( 0; \frac \alpha \pi , 1,1\right) - \zeta _R (0) \right]+ \zeta _{\mathcal B} ' \left( 0; \frac \alpha \pi ,1,1\right) - 
\zeta_R ' (0) \nn\\
&=& \log \left( \frac \alpha \pi \right) \left[ \frac 1 4 + \frac 1 {12} \left( \frac \alpha \pi + \frac \pi \alpha \right) \right] + \zeta _{\mathcal B} ' \left( 0; \frac \alpha \pi , 1,1\right) + \frac 1 2 \log (2\pi ).\nn
\eeq
This shows, rewriting eq.~(\ref{sec7}), that 
\beq
\zeta ' _{S_\alpha} (0) &=& \frac 1 {12} \log \left( \frac \alpha {2\pi} \right) \left[ \frac \alpha \pi + \frac \pi \alpha \right] + \frac 1 4 \log \left( \frac \alpha \pi \right) \nn \\ 
&+& \frac 1 2 \log (2\pi ) 
+ \frac 5 {24 } \frac \alpha \pi +\zeta_{\mathcal B} ' \left( 0; \frac \alpha \pi , 1,1\right). \label{eq:tsectors_pf1}
\eeq

Similarly, for a cone of opening angle $2\alpha$ with Dirichlet boundary conditions imposed at $r=1$, where $(r,\theta )$ are coordinates on the cone whose tip lies at $r=0$, the associated zeta function is
(with notation as before)
\beq
\zeta_{C_{2\alpha}} (s) &=& \sum_{\ell =-\infty}^\infty \sum_{n=1}^\infty \lambda_{n,|\ell |} ^{-2s} = 2 \sum_{\ell =1} ^\infty \sum_{n=1}^\infty \lambda_{n,\ell } ^{-2s} + \sum_{n=1}^\infty \lambda_{n,0}^{-2s}\nn\\
 &=& 2 \zeta_{S_\alpha} (s) + \sum_{n=1}^\infty \lambda_{n,0}^{-2s}.\nn
\eeq
Studies of the zeta function associated with the zeroes of the Bessel function (with no sum over $\ell$ performed) seem to go back to Hawkins \cite{hawk} with several more details provided in 
\cite{actben}. In particular, in \cite[(2.27)]{actben} (see eq. (1.1) for the definition of the relevant 'Bessel $\zeta$-function' containing a factor of $\pi$), the derivative at zero of the last term above
has been computed, which allows us to conclude
\beq
\zeta _{C_{2\alpha } }' (0) = 2 \zeta _{S_\alpha } ' (0) - \frac 1 2 \log (2\pi ). \nn
\eeq
Setting $\alpha = \pi a$, this gives
\beq
\zeta _{C_{2\pi a} } ' (0) &=& \nn \\ 
 && 2 \zeta _{\mathcal B} ' (0; a,1,1 ) + \frac 1 6 \log \left( \frac a 2 \right) \left( a + \frac 1 a \right) + \frac 1 2 \log a + \frac 1 2 \log (2\pi ) + \frac 5 {12 } a, \label{eq:zeta_c2pi_a} 
\eeq
a result due to Spreafico \cite{Spre,Spre1}; for a detailed explanation see \cite{klev,kalvin3}. 

\begin{proof}[Proof of Theorem \ref{t:sectors}]
Equation \ref{eq:magicvar1} is obtained using \eqref{eq:tsectors_pf1} together with the representation of the Barnes zeta function as given in \cite[p.~30]{kalvin2}; see also \cite{AuSa}.  The variational formula given in \eqref{eq:magic_angle} is then obtained by observing that the integrals converge absolutely and may be differentiated with respect to the parameter $\alpha$.  To be more rigorous one may use the dominated convergence theorem.  
To obtain the remaining three variational formulas in this theorem, we recall \cite[Theorem 4]{AldRow}.\footnote{There is unfortunately a typo in the statement of the theorem \cite[Theorem 4]{AldRow} that has been corrected here.  We note that this is mere a transcription error from the contents of the proof.  Further, the boundary contribution was overlooked in \cite{AldRow}, but has been corrected in  \cite{AldRowE}.}

\begin{theorem}[Theorem 4 in \cite{AldRow}] \label{th:correctformula} The angular variation for a sector of opening angle $\alpha$ with radius one is
\[  \frac{d}{d\alpha} (- \log \det (\Delta_{S_\alpha})) = \frac{1}{3\pi} + \frac{\pi}{12 \alpha^2} +  \sum_{k\in W_{\alpha}} \frac{-2\gamma_e + \log 2 - \log\left({1-\cos(2k\alpha)}\right) }{4 \pi (1-\cos(2k\alpha))} \]
\[ + \frac{2}{\alpha} \sin(\pi^2/\alpha) \int_\R \frac{ - \log 2 + 2 \gamma_e + \log(1+\cosh(s))}{16 \pi (1+\cosh(s)) (\cosh(\pi s/\alpha) - \cos(\pi^2/\alpha))} ds. \]
Above, we have defined
\[ k_{min} = \ceil*{ \frac{-\pi}{2\alpha} }, \textrm{ and } k_{max} = \floor*{\frac{\pi}{2\alpha}} \textrm{ if } \frac{\pi}{2\alpha} \not\in \Z, \textrm{ otherwise } k_{max} = \frac{\pi}{2\alpha} - 1, \]
and
\begin{equation} \label{eq:w_alpha} W_{\alpha} =\left\{  k \in \left( \Z \bigcap \left[k_{min},  k_{max}\right]\right) \setminus \left\{ \frac{\ell\pi}{\alpha} \right\}_{\ell \in \Z} \right\}. \end{equation}
\end{theorem}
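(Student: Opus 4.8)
The plan is to prove Theorem \ref{th:correctformula} by showing that its right-hand side coincides with the integral formula \eqref{eq:angularvariationexplicit11} obtained from the method of \cite{bord96}; since both expressions equal $\frac{d}{d\alpha}\left(-\log\det(\Delta_{S_\alpha})\right) = \frac{d}{d\alpha}\zeta'_{S_\alpha}(0)$, establishing this identity is exactly the statement that the two independent computations agree. Rather than working with the already-differentiated integrand at $s=0$, I would retain the regularizing parameter and study $\partial_\alpha \zeta_{\cN+1}(z)$ from \eqref{sec8}--\eqref{sec9}, represented as $\frac{1}{\Gamma(z)}\int_0^\infty t^{z-1}\,\frac{1}{e^t-1}\,\partial_\alpha\!\left(\frac{1}{e^{\pi t/\alpha}-1}\right)dt$, and pass to $\frac{d}{dz}\big|_{z=0}$ only at the very end. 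This is the natural source of the logarithms and of the Euler--Mascheroni constant in Theorem \ref{th:correctformula}: in zeta-regularized determinant computations such terms invariably emerge from $\frac{d}{dz}$ acting on power factors $t^{z}$ together with the expansion $1/\Gamma(z) = z + \gamma_e z^2 + O(z^3)$. (I note in passing that the right-hand side of Theorem \ref{th:correctformula} and the non-exceptional case of Theorem \ref{t:sectors} are the same expression rewritten, via $1-\cos(2k\alpha) = 2\sin^2(k\alpha)$ and the symmetry $k \mapsto -k$ of $W_\alpha$ in \eqref{eq:w_alpha}; the genuine content is matching either of them to \eqref{eq:angularvariationexplicit11}.)

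The computational heart is a contour deformation in the complex $t$-plane. The integrand is meromorphic with all poles on the imaginary axis: simple poles of $\frac{1}{e^t-1}$ at $t=2\pi i n$, and poles of $\frac{1}{e^{\pi t/\alpha}-1}$ at $t=2ik\alpha$, the latter becoming double after $\partial_\alpha$. I would move the path from the positive real axis to the horizontal line $\Ima t = \pi$, which lies strictly between $t=0$ and $t=2\pi i$ and on which $\frac{1}{e^t-1} = -\frac{1}{e^{s}+1}$ is pole-free. The poles swept in the strip $0 < \Ima t < \pi$ are precisely the $t=2ik\alpha$ with $0 < 2k\alpha < \pi$, i.e.\ $k = 1, \dots, \ceil*{\frac{\pi}{2\alpha}-1}$; this reproduces the range governed by $k_{min}, k_{max}$ and the set $W_\alpha$ in \eqref{eq:w_alpha}, with the excluded values $\ell\pi/\alpha$ being exactly the $k$ for which $2ik\alpha$ collides with a pole of $\frac{1}{e^t-1}$. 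The residues at these poles account for the finite sum, the denominators arising from $1-\cos(2k\alpha) = 2\sin^2(k\alpha)$. For the remaining line integral I would substitute $t = s + i\pi$: then $e^{\pi t/\alpha} = e^{\pi s/\alpha}e^{i\pi^2/\alpha}$, so that $|e^{\pi t/\alpha}-1|^2 = 2 e^{\pi s/\alpha}\left(\cosh(\pi s/\alpha) - \cos(\pi^2/\alpha)\right)$, and extracting the imaginary part produces exactly the kernel $\frac{\sin(\pi^2/\alpha)}{\cosh(\pi s/\alpha) - \cos(\pi^2/\alpha)}$ together with the $1+\cosh s$ denominator and the $\frac{1}{\alpha}\sin(\pi^2/\alpha)$ prefactor of the $\R$-integral in Theorem \ref{th:correctformula}.

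I expect the main obstacle to be analytic bookkeeping rather than the deformation itself. First, one must justify moving the contour: the connecting arc near the origin must be controlled, and the Laurent data $b_{-2}, b_{-1}, b_0$ of \eqref{sec8a}, together with the subtractions already present in \eqref{eq:angularvariationexplicit11}, feed into the elementary terms, which must then be checked to assemble into the constant $\frac{1}{3\pi} + \frac{\pi}{12\alpha^2}$. Second, and most delicately, one must carry the factor $t^{z-1}$ and the prefactor $1/\Gamma(z)$ through both the residue sum and the line integral before differentiating at $z=0$; this is the step that produces the logarithmic terms $\log|\sin(k\alpha)|$ and $\log(1+\cosh s)$ and that distributes the $\gamma_e$ and $\log 2$ contributions between the finite sum and the $\R$-integral. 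Making these match exactly --- so that the genuinely different-looking closed form extracted from \eqref{eq:angularvariationexplicit11} reduces to the expression of Theorem \ref{th:correctformula} --- is where the real work lies, and it is precisely why the equality is, as emphasized, not entirely trivial. Finally, the degenerate angles $\alpha = \pi/j$, where a pole $2ik\alpha$ meets a pole of $\frac{1}{e^t-1}$ and the residues become higher order, must be handled separately, yielding the special-case formula recorded in Theorem \ref{t:sectors}.
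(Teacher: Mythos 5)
First, on the logic of your plan: the paper itself never proves Theorem \ref{th:correctformula} --- it is quoted from \cite{AldRow}, where it is derived from a variational Polyakov formula with a logarithmically singular conformal factor and Carslaw's heat kernel. Your strategy of taking \eqref{eq:angularvariationexplicit11} from \S\ref{s:explicit} as ground truth and proving that the right-hand side of Theorem \ref{th:correctformula} equals it is a legitimate alternative; it amounts to running the identity of \S\ref{subsectionmagic1} in reverse. Your pole bookkeeping is also correct: the relevant poles are $t=2ik\alpha$ with $1\le k\le\ceil*{\frac{\pi}{2\alpha}-1}$, and the excluded values $\ell\pi/\alpha$ in $W_\alpha$ are exactly where these collide with poles of $(e^t-1)^{-1}$ (though note these poles lie \emph{on} the connecting segment $\Rea t=0$ of your deformed contour, so indentations are needed). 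The genuine gap is the mechanism you assert for producing the terms of the theorem. Writing the $\alpha$-derivative of \eqref{sec8} as $\frac{\pi}{\alpha^2\Gamma(z)}\int_0^\infty t^z\,\frac{dt}{2(\cosh(\pi t/\alpha)-1)(e^t-1)}$, the residue at the double pole $t_k=2ik\alpha$ is $\frac{\alpha^2}{\pi^2}\bigl[\frac{z\,t_k^{z-1}}{e^{t_k}-1}+\frac{t_k^z}{4\sin^2(k\alpha)}\bigr]$, an entire function of $z$. But for any $F$ regular at $z=0$ one has $\frac{d}{dz}\bigl[F(z)/\Gamma(z)\bigr]\bigm|_{z=0}=F(0)$, because $1/\Gamma(z)=z+\gamma_e z^2+O(z^3)$ pushes both the $\log t_k$ coming from $t_k^z$ and the $\gamma_e$ to order $z^2$. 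Hence the residue sum can only contribute constants times $\sin^{-2}(k\alpha)$; it cannot generate the numerators $\gamma_e+\log|\sin(k\alpha)|$, equivalently $-2\gamma_e+\log 2-\log(1-\cos(2k\alpha))$, of Theorem \ref{th:correctformula}. In the Mellin framework, $\gamma_e$ enters exactly once, through the $1/z$ pole created by the $b_0$ subtraction in \eqref{sec9} (this is why it appears in \eqref{eq:angularvariationexplicit11} only in one elementary term); it is never distributed over a sum of residues, contrary to what your sketch needs.

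The line integral has the same defect. On $\Ima t=\pi$ the only logarithm your mechanism can produce is $\frac{d}{dz}t^z\bigm|_{z=0}=\log(s+i\pi)$, not $\log(1+\cosh s)$; moreover the imaginary part of $\bigl(2(\cosh(\pi t/\alpha)-1)\bigr)^{-1}$ at $t=s+i\pi$ is $-\sinh(\pi s/\alpha)\sin(\pi^2/\alpha)\big/\bigl(2(\cosh(\pi s/\alpha)-\cos(\pi^2/\alpha))^2\bigr)$ --- a squared denominator with an odd factor $\sinh(\pi s/\alpha)$, not the kernel of the theorem. (Your identity $|e^{\pi t/\alpha}-1|^2=2e^{\pi s/\alpha}(\cosh(\pi s/\alpha)-\cos(\pi^2/\alpha))$ is true, but that is the modulus, not the quantity that survives the comparison of the two contours.) Converting what the deformation actually yields into the stated form requires precisely the two nontrivial identities that constitute \S\ref{subsectionmagic1}: Lemma \ref{le:residuesnolog} and Corollary \ref{cor:residues1}, which trade $\gamma_e\int_\R\frac{ds}{(1+\cosh s)(\cosh(\pi s/\alpha)-\cos(\pi^2/\alpha))}$ for $\gamma_e\sum_k\sin^{-2}(k\alpha)$ plus elementary terms via a third-order pole at $s=-i\pi$, and Lemma \ref{le:logmanipulates}, a branch-cut contour computation with an indentation at $s=-i\pi$ and an integration by parts based on $\frac{1}{1-\cosh t}=-2\frac{d}{dt}\frac{1}{1-e^t}$, which handles the $\log(1+\cosh s)$ integral. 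Together with the resummations of Propositions \ref{prop:w_alpha} and \ref{prop:reformulate_sum_w_alpha}, these are the real content of the equivalence; your proposal replaces them with an asserted term-by-term correspondence which, as the computations above show, does not hold. The exceptional angles $\alpha=\pi/j$, which you rightly defer, are a further issue, but the gap is already present for generic $\alpha$.
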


The purpose of the following considerations is to simplify the expressions in Theorem \ref{th:correctformula} such as to match those in Theorem \ref{t:sectors}.  
\begin{prop} \label{prop:w_alpha}
For any angle $\alpha \in (0, \pi)$ the set $W_\alpha$ is precisely the set of integers
\[\Z \supset W_\alpha = \{ j \}_{k_{min}} ^{k_{max}} \setminus \{ 0 \}. \]
Moreover, for all $\alpha$
\[ k_{max} = \ceil*{ \frac{\pi}{2\alpha} -1}.\]
\end{prop}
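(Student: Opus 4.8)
The plan is to split the statement into its two assertions and settle each by elementary estimates involving the floor and ceiling functions. For the first assertion, recall that $W_\alpha$ is obtained from the set of \emph{all} integers lying in $[k_{min},k_{max}]$ by deleting those of the form $\ell\pi/\alpha$ with $\ell\in\Z$. Since $k=0$ (taking $\ell=0$) is always such an integer and always lies in the range, it will suffice to show that $k=0$ is the \emph{only} integer $k\in[k_{min},k_{max}]$ expressible as $\ell\pi/\alpha$ for some $\ell\in\Z$, i.e.\ the only integer $k$ in the range with $k\alpha\in\pi\Z$.

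First I would record the two bounds $k_{max}\le\frac{\pi}{2\alpha}$ and $k_{min}\ge-\frac{\pi}{2\alpha}$. The former holds in both branches of the definition: if $\frac{\pi}{2\alpha}\notin\Z$ then $k_{max}=\floor*{\frac{\pi}{2\alpha}}\le\frac{\pi}{2\alpha}$, while in the exceptional case $k_{max}=\frac{\pi}{2\alpha}-1\le\frac{\pi}{2\alpha}$; the latter is simply $k_{min}=\ceil*{-\frac{\pi}{2\alpha}}\ge-\frac{\pi}{2\alpha}$. Multiplying through by $\alpha>0$ yields $|k\alpha|\le\frac{\pi}{2}$ for every integer $k\in[k_{min},k_{max}]$. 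As the only integer multiple of $\pi$ in $[-\frac{\pi}{2},\frac{\pi}{2}]$ is $0$, the condition $k\alpha=\ell\pi$ forces $\ell=0$ and hence $k=0$. This gives $W_\alpha=\{j\}_{k_{min}}^{k_{max}}\setminus\{0\}$.

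For the second assertion, set $x:=\frac{\pi}{2\alpha}$ and use the identity $\ceil*{x-1}=\ceil*{x}-1$, valid for all real $x$ since subtracting an integer commutes with the ceiling. If $x\notin\Z$ then $\ceil*{x}=\floor*{x}+1$, so $\ceil*{x-1}=\floor*{x}=k_{max}$; if $x\in\Z$ then $\ceil*{x-1}=x-1=k_{max}$. In both cases $k_{max}=\ceil*{\frac{\pi}{2\alpha}-1}$, as claimed.

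The argument is largely bookkeeping and presents no serious obstacle; the only point requiring attention is the branched definition of $k_{max}$ at the exceptional angles with $\frac{\pi}{2\alpha}\in\Z$, where one must check that the stated formulas and bounds remain valid. Once it is observed that neither endpoint $\pm\frac{\pi}{2}$ of the interval bounding $k\alpha$ is a multiple of $\pi$, the estimate is robust and both parts follow immediately.
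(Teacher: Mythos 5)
Your proof is correct, and for the first assertion it follows a genuinely different route from the paper. The paper argues by contradiction with a case analysis: it assumes a nonzero integer $k=\ell\pi/\alpha$ lies in the range, writes $\alpha=\ell\pi/k$, and then separately treats the generic case $\frac{\pi}{2\alpha}\notin\Z$ and the exceptional angles $\alpha=\pi/j$ with $j$ even or odd, computing $k_{min}$ and $k_{max}$ explicitly in each case to see that $k$ falls outside $[k_{min},k_{max}]$. Your argument replaces all of this with the single uniform estimate $|k\alpha|\le\frac{\pi}{2}$ for every integer $k\in[k_{min},k_{max}]$ (valid in both branches of the definition of $k_{max}$), after which $k\alpha=\ell\pi$ forces $\ell=0$ and hence $k=0$; this is shorter, avoids the branching entirely, and is robust precisely because $\pm\frac{\pi}{2}$ is not a multiple of $\pi$, as you note. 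What the paper's longer route buys is the explicit values $k_{min}=-n$ and $k_{max}=n-1$ (respectively $n$) at the angles $\alpha=\pi/(2n)$ (respectively $\pi/(2n+1)$), which are reused in the proof of the subsequent proposition on reindexing the sums; your proof does not produce these, but they are not needed for the statement at hand. For the second assertion your argument via $\ceil*{x-1}=\ceil*{x}-1$ is essentially identical to the paper's.
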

\begin{proof} The set $W_\alpha$ as in \eqref{eq:w_alpha} excludes integers of the form $k = \frac{\ell \pi}{\alpha}$, for $\ell \in \Z$.  Consequently zero is always excluded from $W_\alpha$ for any $\alpha$.  Assume now that there is a non-zero integer $k = \frac{\ell \pi}{\alpha}$.  Then it follows immediately that
\[ \alpha = \frac{\ell \pi}{k}.\]
Without loss of generality, assume $\ell$ and $k$ are positive.  Since $\ell \geq 1$,
\[ - k < - \frac{k}{2 \ell} \leq k_{min}. \]
If $\frac{\pi}{2\alpha} \not \in \Z$, then
\[ k_{max} =  \floor*{\frac{\pi}{2\alpha}} =  \floor*{\frac{k}{2\ell}} < k.\]
If $\alpha = \frac \pi j$ in case $j=2n$ is even, then $k_{min} = -n$, and $k_{max} = n-1$.  In this case, for any integer $\ell$,
\[ \frac{\ell \pi}{\alpha} = 2 n \ell > k_{max}, \quad \forall \ell \in \N.\]
If $\alpha = \frac \pi j$ in case $j=2n+1$ is odd, then $k_{min} = -n$, and $k_{max} = n$.  In this case, for any integer $\ell$,
\[ \frac{\ell \pi}{\alpha} = 2 (n+1) \ell > k_{max}, \quad \forall \ell \in \N.\]
We therefore see that in all cases,
\[ \Z \ni k  = \frac{\ell \pi}{\alpha} \in [k_{min}, k_{max}] \implies k=\ell = 0. \]
Finally, we note that if
\[ \frac{\pi}{2\alpha} \not \in \Z \implies \ceil*{ \frac{\pi}{2\alpha} -1 } = \floor*{ \frac{\pi}{2\alpha}},\]
whereas if
\[ \frac{\pi}{2\alpha} \in \Z \implies k_{max} = \frac{\pi}{2\alpha} - 1 =  \ceil*{ \frac{\pi}{2\alpha} -1 }.\]
\end{proof}

We next manipulate the sums in the variational formula of Theorem \ref{th:correctformula}.  
\begin{prop} \label{prop:reformulate_sum_w_alpha}
For all $\alpha \in (0, \pi)$
\[\sum_{k\in W_{\alpha}} \frac{-2\gamma_e + \log 2  - \log\left({1-\cos(2k\alpha)}\right) }{4 \pi (1-\cos(2k\alpha))} =  -  \sum_{k\in W_{\alpha}} \frac{\gamma_e + \log |\sin(k \alpha)|}{4\pi \sin^2 (k \alpha)} . \]
If $\alpha$ is not of the form $\frac{\pi}{2n}$ for any $n \in \N$, then 
\[ -  \sum_{k\in W_{\alpha}} \frac{\gamma_e + \log |\sin(k \alpha)|}{4\pi \sin^2 (k \alpha)} = -  \sum_{k=1} ^{\ceil*{ \frac{\pi}{2\alpha} -1}}  \frac{\gamma_e + \log |\sin(k \alpha)|}{2\pi \sin^2 (k \alpha)}.\]
In case $\alpha = \frac \pi j$ for some $j \in \N$, then 
\[-  \sum_{k\in W_{\alpha}} \frac{\gamma_e + \log |\sin(k \alpha)|}{4\pi \sin^2 (k \alpha)}  = - \frac{\gamma_e}{12\pi} \left( \frac{\pi^2}{\alpha^2} - 1 \right) - \frac{1}{2\pi} \sum_{k=1} ^{\ceil*{ \frac{\pi}{2\alpha} -1}} \frac{ \log | \sin(k \alpha)|}{\sin^2(k \alpha)}. \]
\end{prop}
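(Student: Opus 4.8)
The plan is to treat the three assertions in turn: the first is a pure trigonometric rewriting, the second a symmetry (pairing) argument supplied by Proposition \ref{prop:w_alpha}, and the third an application of the classical identity for $\sum \sin^{-2}$ together with a parity bookkeeping.

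First I would prove the identity valid for all $\alpha\in(0,\pi)$ by working term-by-term. Applying the half-angle identity $1-\cos(2k\alpha)=2\sin^2(k\alpha)$, the denominator becomes $4\pi(1-\cos(2k\alpha))=8\pi\sin^2(k\alpha)$, while $\log(1-\cos(2k\alpha))=\log 2+2\log|\sin(k\alpha)|$ collapses the numerator to
\[ -2\gamma_e+\log 2-\log(1-\cos(2k\alpha))=-2\bigl(\gamma_e+\log|\sin(k\alpha)|\bigr). \]
Dividing, each summand equals $-\frac{\gamma_e+\log|\sin(k\alpha)|}{4\pi\sin^2(k\alpha)}$, and summing over $W_\alpha$ gives the first displayed equality. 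There are no convergence issues since $W_\alpha$ is finite and, by Proposition \ref{prop:w_alpha}, excludes every $k$ with $\sin(k\alpha)=0$.

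Next, for $\alpha\neq\frac{\pi}{2n}$ I would invoke Proposition \ref{prop:w_alpha}, which identifies $W_\alpha=\{k_{min},\dots,k_{max}\}\setminus\{0\}$. The hypothesis is equivalent to $\frac{\pi}{2\alpha}\notin\Z$, and in that regime $k_{min}=\ceil*{-\frac{\pi}{2\alpha}}=-\floor*{\frac{\pi}{2\alpha}}=-k_{max}$, so the index set is symmetric about the origin. Since the summand $\frac{\gamma_e+\log|\sin(k\alpha)|}{4\pi\sin^2(k\alpha)}$ is even in $k$, the sum over $W_\alpha$ equals twice the sum over $k=1,\dots,k_{max}$, which converts the prefactor $\frac1{4\pi}$ into $\frac1{2\pi}$ and, using $k_{max}=\ceil*{\frac{\pi}{2\alpha}-1}$, yields the stated expression. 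For $\alpha=\frac{\pi}{j}$ I would separate the $\gamma_e$ contribution from the $\log|\sin(k\alpha)|$ contribution and evaluate the former in closed form using the classical identity $\sum_{k=1}^{j-1}\sin^{-2}(k\pi/j)=\frac{j^2-1}{3}$; the log part matches directly. The bookkeeping splits by the parity of $j$. For odd $j=2n+1$ one has $\frac{\pi}{2\alpha}=n+\frac12\notin\Z$, so the previous paragraph applies with $k_{max}=n$, and the symmetry $k\mapsto j-k$ halves the classical identity to $\sum_{k=1}^{n}\sin^{-2}(k\pi/j)=\frac{j^2-1}{6}$, turning $-\frac{\gamma_e}{2\pi}\sum_{k=1}^{n}\sin^{-2}(k\alpha)$ into $-\frac{\gamma_e}{12\pi}(j^2-1)=-\frac{\gamma_e}{12\pi}\bigl(\frac{\pi^2}{\alpha^2}-1\bigr)$. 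For even $j=2n$ the set $W_\alpha=\{-n,\dots,n-1\}\setminus\{0\}$ is \emph{not} symmetric, so pairing $k\mapsto-k$ for $1\le k\le n-1$ leaves the single unpaired term $k=-n$; because $n\alpha=\frac{\pi}{2}$ gives $\sin(n\alpha)=1$ and hence $\log|\sin(n\alpha)|=0$, this term contributes exactly $\frac{\gamma_e}{4\pi}$, which is precisely the discrepancy that upgrades $\frac{2(n^2-1)}{3}$ (the value of $\sum_{k=1}^{n-1}\sin^{-2}(k\pi/2n)$ from the classical identity) to the full $\frac16(j^2-1)$.

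The hard part will be the even-$j$ case: losing the symmetry of the index set forces one to track the bare boundary term $k=-n$ separately, and overlooking it—or confusing which of the two regimes governs a given $j$—would corrupt the constant $-\frac{\gamma_e}{12\pi}\bigl(\frac{\pi^2}{\alpha^2}-1\bigr)$. The only input beyond elementary manipulation is the closed form of $\sum_{k=1}^{j-1}\sin^{-2}(k\pi/j)$, which I would cite or derive from the partial-fraction expansion of $\csc^2$ (equivalently, from Newton's identities applied to the $j$-th roots of unity).
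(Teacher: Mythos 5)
Your proposal is correct, and for the first two assertions it coincides with the paper's proof: the half-angle identities $1-\cos(2k\alpha)=2\sin^2(k\alpha)$ and $\log(1-\cos(2k\alpha))=\log 2+2\log|\sin(k\alpha)|$ give the rewriting, and Proposition \ref{prop:w_alpha} plus the evenness of the summand and the symmetry $k_{min}=-k_{max}$ (valid exactly when $\pi/(2\alpha)\notin\Z$) give the factor-of-two simplification. For $\alpha=\pi/j$ your skeleton is also the same — separate the $\gamma_e$ part from the $\log|\sin|$ part, and observe that the even-$j$ asymmetry of the index set is harmless for the log part because $\log|\sin(k_{min}\alpha)|=\log|\sin(\pi/2)|=0$ — but the computational engine differs. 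The paper never cites the classical identity $\sum_{k=1}^{j-1}\csc^2(k\pi/j)=(j^2-1)/3$: it first merges the negative and positive index ranges into a single sum over $k=1,\dots,j-1$ using the $\pi$-periodicity of $|\sin|$ and $\sin^2$, and then \emph{derives} that identity by a regularization argument, writing $\csc^2(k\alpha)$ via complex exponentials, expanding in a damped geometric series $\sum_{\ell}\ell e^{-\eps\ell}$, interchanging the sums (where the condition $2\alpha(N+1)=2\pi$ enters), and extracting the finite part as $\eps\to 0$, which yields $\sum_{k=1}^{j-1}\csc^2(k\alpha)=\tfrac{\pi^2}{3\alpha^2}-\tfrac13$. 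You replace this computation by a citation (or a partial-fraction derivation) of the classical identity and instead keep the $\pm k$ pairing, tracking the unpaired term $k=-n$ at even $j=2n$ explicitly; your arithmetic there is right, since the extra term is exactly what promotes $4(n^2-1)$ to $4n^2-1$ in the constant. The paper's route buys self-containedness (and the regularization trick is reused in spirit elsewhere in the paper); yours buys brevity. One cosmetic point: inside the overall negative sum the unpaired term contributes $-\gamma_e/(4\pi)$, not $+\gamma_e/(4\pi)$ as written, though the discrepancy bookkeeping you then perform uses the correct sign.
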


\begin{proof}
Note that
\[ \cos(2k \alpha) = \cos^2 (k\alpha) - \sin^2(k \alpha) = 1 - 2 \sin^2 (k \alpha) \implies  1- \cos(2k\alpha) = 2 \sin^2 (k \alpha), \]
and
\[ \log(1-\cos(2k\alpha)) = \log(2 \sin^2 (k \alpha)) = \log 2 + 2 \log |\sin(k\alpha)|. \]
We therefore have
\[ \sum_{k\in W_{\alpha}} \frac{-2\gamma_e + \log 2 - \log\left({1-\cos(2k\alpha)}\right) }{4 \pi (1-\cos(2k\alpha))} \]
\[ = \sum_{k\in W_{\alpha}} \frac{-2\gamma_e - 2\log |\sin(k \alpha)|}{8\pi \sin^2 (k \alpha)} =  -  \sum_{k\in W_{\alpha}} \frac{\gamma_e + \log |\sin(k \alpha)|}{4\pi \sin^2 (k \alpha)}. \]
By the preceding proposition this sum is
\[ \sum_{k=k_{min}} ^{-1}  \frac{\gamma_e + \log |\sin(k \alpha)|}{4\pi \sin^2 (k \alpha)} - \sum_{k=1} ^{k_{max}}  \frac{\gamma_e + \log |\sin(k \alpha)|}{4\pi \sin^2 (k \alpha)}.\]
We note that
\[ \lceil \frac{\pi}{2\alpha} - 1 \rceil = k_{max} = \begin{cases} - k_{min}, & \alpha \neq \frac{\pi}{2n}, \\ - k_{min} - 1, & \alpha = \frac{\pi}{2n}, \end{cases}, \]
where above $n \in \N$.  Next, we observe that
\[ | \sin(k \alpha)| = |\sin(-k \alpha)|, \quad \sin^2 (k \alpha) = \sin^2 (-k \alpha).\]
Consequently
\[ - \sum_{k=k_{min}} ^{-1}  \frac{\gamma_e + \log |\sin(k \alpha)|}{4\pi \sin^2 (k \alpha)} - \sum_{k=1} ^{k_{max}}  \frac{\gamma_e + \log |\sin(k \alpha)|}{4\pi \sin^2 (k \alpha)} \]
\[ = - \sum_{k=1} ^{k_{max}}  \frac{\gamma_e + \log |\sin(k \alpha)|}{2 \pi \sin^2 (k \alpha)} =   - \sum_{k=1} ^{\ceil*{ \frac{\pi}{2\alpha} -1}}  \frac{\gamma_e + \log |\sin(k \alpha)|}{2 \pi \sin^2 (k \alpha)}, \quad \alpha \neq \frac{\pi}{2n}. \]

In case $\alpha = \frac \pi j$ we now proceed to simplify the formula.  For notational convenience, let us define
\[ -n := k_{min} = \ceil*{ - \frac{\pi}{2 \alpha} }, \quad \kmax := \begin{cases} n, & j = 2n+1 \textrm{ is odd,} \\ n-1 & j = 2n, \textrm{ is even.} \end{cases} \]
We therefore consider
\[ - \sum_{k=-n} ^{-1}  \frac{\gamma_e + \log |\sin(k \alpha)|}{4\pi \sin^2 (k \alpha)} -  \sum_{k=1} ^{\kmax}  \frac{\gamma_e + \log |\sin(k \alpha)|}{4\pi \sin^2 (k \alpha)}.\]
We would like to use the fact that $|\sin(x)|$ and $\sin^2(x)$ are periodic with period $\pi$ to shift the first sum.  To do this we observe that in case $j=2n$ is even, we have
\[ \sin^2(k \alpha) = \sin^2((k+2n) \alpha), \quad \log | \sin(k\alpha)| = \log |\sin((k+2n)\alpha)|, \quad \kmax = n-1.\]
We therefore have in this case
\[  - \sum_{k=-n} ^{-1}  \frac{\gamma_e + \log |\sin(k \alpha)|}{4\pi \sin^2 (k \alpha)} = -  \sum_{k=\kmax+1} ^{\kmax+|k_{min}|}  \frac{\gamma_e + \log |\sin(k \alpha)|}{4\pi \sin^2 (k \alpha)} \]
\[ \implies  - \sum_{k=-n} ^{-1}  \frac{\gamma_e + \log |\sin(k \alpha)|}{4\pi \sin^2 (k \alpha)} -  \sum_{k=1} ^{\kmax}  \frac{\gamma_e + \log |\sin(k \alpha)|}{4\pi \sin^2 (k \alpha)} = - \sum_{k=1} ^{\kmax + |k_{min}|}  \frac{\gamma_e + \log |\sin(k \alpha)|}{4\pi \sin^2 (k \alpha)}.\]
Next we consider the case in which $j = 2n+1$ is odd.  Then we have
\[ \sin^2(k \alpha) = \sin^2((k+2n+1) \alpha), \quad \log | \sin(k\alpha)| = \log |\sin((k+2n+1)\alpha)|, \quad \kmax = n.\]
We therefore obtain as well in this case
\[ - \sum_{k=-n} ^{-1}  \frac{\gamma_e + \log |\sin(k \alpha)|}{4\pi \sin^2 (k \alpha)} = -  \sum_{k=\kmax+1} ^{\kmax+|k_{min}|}  \frac{\gamma_e + \log |\sin(k \alpha)|}{4\pi \sin^2 (k \alpha)}.\]
Consequently, in all cases where $\alpha = \frac \pi j$ for some $j \in \N$ we obtain that the sum is
\[  -  \sum_{k=1} ^{\kmax+|k_{min}|}  \frac{\gamma_e + \log |\sin(k \alpha)|}{4\pi \sin^2 (k \alpha)}.\]

Let us define
\begin{equation} \label{eq:cS_and_N} \cS := - \frac 1 4 \sum_{k=1} ^{N} \frac{1}{\sin^2 (k \alpha)}, \quad N := \kmax + |k_{min}| = \begin{cases} 2n-1, & \alpha = \frac{\pi}{2n}, \\ 2n, & \alpha = \frac{\pi}{2n+1}. \end{cases} \end{equation}
We compute that
\[ \sin^2 (k \alpha) = - \frac 1 4 \left( e^{ik\alpha} - e^{-ik\alpha} \right)^2 = - \frac 1 4 e^{2ik\alpha} \left( 1 - e^{-2ik\alpha}\right)^2.\]
Then
\[ \cS =  \sum_{k=1} ^{N} \frac{e^{-2ik\alpha}}{(1-e^{-2ik\alpha})^2} =  \lim_{\eps \to 0} \sum_{k=1} ^{N} \frac{e^{-i(2k\alpha-i \eps)}}{\left( 1 - e^{-i(2k\alpha - i \eps)} \right)^2}.\]
We do this to exploit the geometric series for $|z| < 1$,
\[ \frac{1}{(1-z)^2} = \sum_{\ell = 1} ^\infty \ell z^{\ell - 1}. \]
With $z = e^{-i(2k\alpha - i \eps)}$ we therefore have
\[ \cS =  \lim_{\eps \to 0} \sum_{k=1} ^{N}  e^{-i(2k\alpha - i \eps)} \sum_{\ell = 1} ^\infty \ell e^{-i(2k\alpha - i \eps)(\ell - 1)} =  \lim_{\eps \to 0} \sum_{\ell=1} ^\infty \ell e^{-\eps \ell} \sum_{k=1} ^{N} e^{-i2k\ell\alpha}. \]
The sum over $k$ can be computed using a geometric series.  There are two cases.  In case
\[ 2i\ell \alpha  = 2m\pi i \quad \textrm{with} \quad m \in \N \implies \sum_{k=1} ^{N} e^{-i2k\ell\alpha} = N. \]
This is the case when
\[ \ell = \frac{m\pi}{\alpha}, \quad m \in \N. \]
When $\ell$ is not of this type, the series sums to
\[  \sum_{k=1} ^{N} e^{-i2k\ell\alpha} = \frac{e^{-i 2 \ell \alpha} - e^{-2i\ell \alpha(N +1)} }{1 - e^{-i 2 \ell \alpha} } = -1. \]
The simplification above is due to the fact that by \eqref{eq:cS_and_N}, we always have
\begin{equation} \label{eq:Nplusone}  2 \alpha (N+1) = 2 \pi. \end{equation}

We therefore have
\[ \cS = \lim_{\eps \to 0} \left\{ -  \sum_{\ell\geq 1, \ell \neq \frac{m \pi}{\alpha}, m \in \N}  \ell e^{-\eps \ell}  +  \sum_{\ell= \frac{m \pi}{\alpha}, m \in \N} ^\infty  \ell e^{-\eps \ell} N \right\} .\]
\[ = \lim_{\eps \to 0} \left\{ -  \sum_{\ell\geq 1, \ell \neq \frac{m \pi}{\alpha}, m \in \N}  \ell e^{-\eps \ell}  +  \sum_{\ell= \frac{m \pi}{\alpha}, m \in \N} ^\infty  \ell e^{-\eps \ell} (N+1-1) \right\} .\]
We do this to write
\[ \cS =  \lim_{\eps \to 0} \left\{ -  \sum_{\ell\geq 1}  \ell e^{-\eps \ell}  +  \sum_{\ell= \frac{m \pi}{\alpha}, m \in \N} ^\infty  \ell e^{-\eps \ell} (N+1) \right\} .\]
Next we re-write the second sum in terms of $m$, so that
\[ \cS = \lim_{\eps \to 0} \left\{ -  \sum_{\ell\geq 1}  \ell e^{-\eps \ell}  +  (N+1) \frac \pi \alpha \sum_{ m =1} ^\infty  m e^{-\eps m \pi/\alpha}  \right\}.\]
Here we recall that
\begin{equation} \label{eq:herewerecall} \sum_{\ell = 1} ^\infty \ell e^{-\eps x \ell} = - \frac 1 \eps \frac{d}{dx} \sum_{\ell = 0} ^\infty e^{-\eps x \ell} = \frac{e^{-\eps x}}{(1-e^{-\eps x})^2} = \frac{1}{\eps^2} \frac{1}{x^2} - \frac{1}{12} + \mathcal O ((\eps x)^2). \end{equation}
Consequently
\[ \cS =  \lim_{\eps \to 0} \left \{ - \frac{e^{-\eps }}{(1-e^{-\eps })^2} +  (N+1) \frac \pi \alpha  \frac{e^{-\eps \pi/\alpha}}{(1-e^{-\eps \pi/\alpha})^2} \right\}.\]
By \eqref{eq:Nplusone},
\[ (N+1) \frac \pi \alpha = \frac{\pi^2}{\alpha^2}. \]
Therefore  by \eqref{eq:herewerecall},
\[ \cS =  \lim_{\eps \to 0} \left \{ - \frac{1}{\eps^2} + \frac{1}{12} + \mathcal O(\eps) + \frac{\pi^2}{\alpha^2} \left( \frac{1}{\eps^2} \frac{\alpha^2}{\pi^2} - \frac{1}{12} + \mathcal O (\eps \pi/\alpha) \right) \right\} = \frac{1}{12} - \frac{\pi^2}{12 \alpha^2}. \]
Consequently, when $\alpha = \frac \pi j$ for some $j \in \N$,
\[ -  \sum_{k=1} ^{\kmax+|k_{min}|}  \frac{\gamma_e + \log |\sin(k \alpha)|}{4\pi \sin^2 (k \alpha)} = \frac{\gamma_e}{\pi} S  -  \sum_{k=1} ^{\kmax+|k_{min}|} \frac{\log |\sin(k \alpha)|}{4\pi \sin^2 (k \alpha)}\]
\[ = \frac{\gamma_e}{12 \pi} \left( 1 - \frac{\pi^2}{\alpha^2} \right)  -  \sum_{k=1} ^{\kmax+|k_{min}|} \frac{\log |\sin(k \alpha)|}{4\pi \sin^2 (k \alpha)}.\]
We recall that we obtained
\[  -  \sum_{k=1} ^{\kmax+|k_{min}|} \frac{\log |\sin(k \alpha)|}{4\pi \sin^2 (k \alpha)} = - \sum_{k=k_{min}} ^{-1}  \frac{ \log |\sin(k \alpha)|}{4\pi \sin^2 (k \alpha)} - \sum_{k=1} ^{k_{max}}  \frac{ \log |\sin(k \alpha)|}{4\pi \sin^2 (k \alpha)}.\]
When $\alpha = \frac \pi j$ for $j$ odd, $k_{min} = - k_{max}$, and we therefore have
\[- \sum_{k=k_{min}} ^{-1}  \frac{ \log |\sin(k \alpha)|}{4\pi \sin^2 (k \alpha)} - \sum_{k=1} ^{k_{max}}  \frac{ \log |\sin(k \alpha)|}{4\pi \sin^2 (k \alpha)} = - \sum_{k=1} ^{k_{max}}  \frac{ \log |\sin(k \alpha)|}{2\pi \sin^2 (k \alpha)}.\]
When $\alpha = \frac{\pi}{2n}$, we have $-n=k_{min}$ whereas $n-1 = k_{max}$.  In this case, however we note that
\[ \log| \sin(k_{min} \alpha)| = \log |\sin(\pi/2)| = 0.\]
Consequently, we may also in this case combine the sums, obtaining
\[- \sum_{k=k_{min}} ^{-1}  \frac{ \log |\sin(k \alpha)|}{4\pi \sin^2 (k \alpha)} - \sum_{k=1} ^{k_{max}}  \frac{ \log |\sin(k \alpha)|}{4\pi \sin^2 (k \alpha)} = - \sum_{k=1} ^{k_{max}}  \frac{ \log |\sin(k \alpha)|}{2\pi \sin^2 (k \alpha)} \] 
\[ =  - \sum_{k=1} ^{\ceil*{ \frac{\pi}{2\alpha} -1}} \frac{ \log |\sin(k \alpha)|}{2\pi \sin^2 (k \alpha)}. \]
\end{proof}

Based on the preceding proposition, we therefore have that if $\alpha \in (0, \pi)$ is not of the form $\frac{\pi}{2n}$ for any $n \in \N$, the variational formula in Theorem \ref{th:correctformula} simplifies to
\[ \frac{d}{d\alpha} (- \log \det (\Delta_{S_\alpha})) = \frac{1}{3\pi} + \frac{\pi}{12 \alpha^2}  -  \sum_{k=1} ^{\ceil*{ \frac{\pi}{2\alpha} -1} }  \frac{\gamma_e + \log |\sin(k \alpha)|}{2\pi \sin^2 (k \alpha)} \]
\[ + \frac{1}{\alpha} \sin(\pi^2/\alpha) \int_\R \frac{ - \log 2 + 2 \gamma_e + \log(1+\cosh(s))}{8 \pi (1+\cosh(s)) (\cosh(\pi s/\alpha) - \cos(\pi^2/\alpha))} ds.\]
In case $\alpha = \frac \pi j$ for some $j \in \N$, the expression simplifies to
\[\frac{d}{d\alpha} (- \log \det (\Delta_{S_\alpha})) = \frac{1}{3\pi} + \frac{\pi}{12 \alpha^2} - \frac{\gamma_e}{12\pi} \left( \frac{\pi^2}{\alpha^2} - 1 \right) - \frac{1}{2\pi} \sum_{k=1} ^{\ceil*{ \frac{\pi}{2\alpha} -1}} \frac{ \log | \sin(k \alpha)|}{\sin^2(k \alpha)}. \]
\end{proof} 

\begin{proof}[Proof of Theorem \ref{cthm:detcone}] 
The first displayed equation in this theorem, for $-\log(\det(\Delta_{C_\alpha}))$ is an immediate consequence of \eqref{eq:zeta_c2pi_a} together with a representation of the Barnes zeta function as given, for example, in \cite[p.~30]{kalvin2}; see also \cite{AuSa}. The variational formula is then obtained by differentiating with respect to the parameter $\alpha$, which is valid since the integrals converge absolutely.  To be more rigorous one may use the dominated convergence theorem.  The remaining variational formulas follow from the relationship between the zeta function on the cone and on the sector, together with the formulas obtained for the variation of the sector in Theorem \ref{t:sectors}. 
\end{proof}

\begin{appendix}
\section{Heat trace contribution for conical singularities} \label{s:heatcalcapp}
The heat kernel for a cone of angle $2 \alpha$ was obtained by Carslaw \cite{carslaw}
\begin{equation} H_{2\alpha} (r, \phi, r', \phi', t) = \frac{e^{-(r^2 + r'^2)/4t}}{8 \pi \alpha t} \int_{A_\phi} e^{r r' \cos(z-\phi)/2t} \frac{e^{ i \pi z/\alpha}}{e^{i  \pi z/\alpha} - e^{i  \pi \phi'/\alpha}} dz.\end{equation}
The contour $A_\phi$ in the complex plane is described and depicted in \cite[\S 6]{AldRow}; we omit this detail here as it is not pertinent to our present aim. This expression can be used to obtain the Dirichlet and Neumann heat kernels for a sector of angle $\alpha$ by the method of images as done in \cite[\S 6]{AldRow}.  One may also reverse the process to obtain the heat kernel for the cone of angle $2 \alpha$,
\begin{equation} \label{hk2alpha} H_{2\alpha} (r, \phi, r', \phi', t) = \frac 1 2 \left( \widetilde H_D + \widetilde H_N \right). \end{equation}
The meaning of $\widetilde H_D$ is that it is the odd extension of the Dirichlet heat kernel $H_D$, originally defined on the sector of angle $\alpha$, extended to an odd function on the double of the sector.  This is an odd function with respect to the involution that swaps the two identical sectors of angle $\alpha$.  Similarly, $\widetilde H_N$ is the even extension of the Neumann heat kernel $H_N$, originally defined on the sector of angle $\alpha$ and extended to the double of the sector, that of angle $2\alpha$, to be an even function with respect to this involution.

The heat kernel for an infinite sector of opening angle $\alpha$ with the Dirichlet and Neumann boundary conditions also admit an explicit integral representation.  Let
\[ A := \int_{0}^{\infty}K_{i\mu}(r \sqrt s)K_{i\mu}(r_0 \sqrt s) \cosh(\pi-|\phi_0-\phi|)\mu d\mu, \]
\[ B := \int_{0}^{\infty}K_{i\mu}(r \sqrt s)K_{i\mu}(r_0 \sqrt s) \frac{\sinh\pi\mu}{\sinh\alpha\mu}\cosh(\phi+\phi_0-\alpha)\mu d\mu, \]
\[ C :=  \int_{0}^{\infty}K_{i\mu}(r \sqrt s)K_{i\mu}(r_0 \sqrt s) \frac{\sinh(\pi-\alpha)\mu}{\sinh\alpha\mu}\cosh(\phi-\phi_0)\mu  d\mu. \]
Above $K_{i\mu}$ is the modified Bessel function of the second type.  The Dirichlet and Neumann Green's functions for the infinite sector of angle $\alpha$, are, respectively,
\[ G_D = \frac{1}{\pi^2} \left( A - B + C \right), \quad G_N = \frac{1}{\pi^2} \left( A + B + C \right).\]
The spectral parameter is $s$.  Using functional calculus one may prove that the heat kernel is obtained by taking the inverse Laplace transform in the $s$ variable, that is
\[ H_D(r, \phi, r', \phi', t) = \cL^{-1} (G_D)(t), \quad H_N(r, \phi, r', \phi', t) = \cL^{-1} (G_N)(t). \]
The contributions of the terms $A$, $B$, and $C$ to the heat trace were computed in \cite[\S 3]{nrs1}; see also \cite{vdbs}.  In particular, we computed the integral of each of these expressions, along the diagonal $r=r_0$ and $\phi=\phi_0$ over the region $[0, R]_r \times [0, \alpha]_\phi$ with respect to polar coordinates $(r, \phi)$.

The contribution from the $A$ term according to \cite[\S 3.1.1]{nrs} is
\begin{equation} \label{A-tr} \int_0 ^\alpha \int_0 ^R \frac{1}{\pi^2} \cL^{-1} (A) r dr d\phi = \frac{\alpha R^2}{8 \pi t} = \frac{ \textrm{Area}}{4 \pi t}. \end{equation}
The $B$ term contributes \cite[\S 3.1.2]{nrs1}
\begin{equation} \label{Btrace} \frac{R}{4 \sqrt{\pi t}} + O(\sqrt{t}), \quad t \downarrow 0. \end{equation}
Here we recognize the familiar perimeter term:
\[ \frac{ \textrm{ Perimeter }}{8 \sqrt{\pi t}}. \]
The term $C$ contributes to the trace \cite[\S 3.1.3]{nrs1}
\begin{equation} \label{Ctrace} \frac{\pi^2 - \alpha^2}{24\pi \alpha}.  \end{equation}

This shows that if we use the expression in \eqref{hk2alpha}, since we integrate over two copies of the sector of angle $\alpha$, the contribution to the $t^0$ term in the heat trace for the cone of angle $2\alpha$ is
\[ \frac 1 2 \left( 2 \frac{\pi^2 - \alpha^2}{24\pi \alpha} + 2 \frac{\pi^2 - \alpha^2}{24\pi \alpha} \right) =  \frac{\pi^2 - \alpha^2}{12 \pi \alpha}.\]
In terms of the opening angle of the cone,
\[ \gamma = 2 \alpha, \]
we may re-write this as
\[ \frac{\pi^2 - (\gamma/2)^2}{12 \pi \gamma/2} = \frac{(2\pi)^2 - \gamma^2}{24 \pi \gamma}. \]
Note that our calculation agrees with Cheeger's expression \cite[4.42]{cheeger}, and we have computed by a completely independent method.

\section{Polyakov formula for changing the angle of a finite cone}  \label{ss:vpcone}
An alternative way to obtain the differentiated Polyakov formula on a finite cone is by considering conformal transformations of the metric that represent a change in the cone angle, analogous to the approach of \cite{AldRow}.  In this case, the conformal factors have logarithmic singularities at the conical singularity. We represent a finite circular cone with opening angle $\alpha$ by $(M_\alpha,g_\alpha)$, where
\[ M_\alpha = [0,1]\times S^1_\alpha, \quad \quad S^1_\alpha =\{e^{i \frac{2\pi}{\alpha} \phi} \ | \ 0\leq \phi \leq \alpha\} \]
with metric
\[ g_\alpha = dr^2 + r^2 d\phi^2, \quad \quad r\in [0,1], \quad \phi \in S^1_\alpha. \]
With this description, we have that
\[ \int_{S^1_\alpha} d\phi = \alpha, \]
is the cone angle. Above we wrote $S^1_{\alpha}$ however, if there is no place to confusion, we drop the angle in the notation.

As in \cite[\S 2]{AldRow}, we fix a cone $Q=M_\beta$ with opening angle $\beta \leq \alpha$.  We use $(\rho, \theta)$ to denote polar coordinates on $Q$, so that the metric on $Q$ is
\[ g _\beta = d\rho^2 + \rho^2 d\theta^2,  \quad \quad \rho\in [0,1], \quad \theta \in S^1_\beta. \]
Analogously $Q$ has cone angle
\[\int_{S^1_\beta} d\theta = \beta.\]

In order to express the variation of the cone angle as a conformal transformation, we define the map
\[ \Psi_\gamma: Q \to M_\gamma, \quad (\rho, \theta) \mapsto \left( \rho^{\gamma/\beta},  \frac{\gamma \theta}{\beta} \right) = (r, \phi). \]
We consider the opening angle, $\gamma$ as a variable and will differentiate with respect to this variable and then evaluate at $\gamma = \alpha$.  The pull-back metric of the metric $g_\gamma$ is
\[ h_\gamma := \Psi^*_\gamma g_\gamma = e^{ 2 \sigma_{\gamma}} \left( d\rho^2 + \rho^2 d\theta^2 \right), \]
where
\[ \sigma_\gamma(\rho, \theta)  = \log \left( \frac \gamma \beta \right) + \left( \frac \gamma \beta - 1 \right) \log \rho. \]
The calculations to compute the variational Polyakov formula in this case are completely ana\-lo\-gous to those for the sector in \cite{AldRow}.  First we obtained the existence of a short time asymptotic expansion
\[ \Tr_{L^{2}(M_\alpha,g_\alpha)} \left( \cM_{(1+\log r)} e^{-t \Delta_\alpha}\right) \sim a_0 t^{-1} + a_1 t^{-\frac 1 2} + a_{2,0} \log(t) + a_{2,1} + \mathcal O(\sqrt t \log t), \]
for $t \downarrow 0$.  For a truncated cone with Dirichlet boundary condition at the smooth boundary component, $\ker(\Delta_{\alpha}) =\{0\}$,
therefore, the proof for finite sectors in \cite{AldRow} immediately implies the formula
\[ \left . \frac{d}{d \gamma} \zeta_{\Delta_{\gamma}} ' (0)\right|_{\gamma = \alpha} = \frac 2 \alpha \left( - \gamma_e a_{2,0} + a_{2,1} \right). \]
At this point we could continue with the proof as in the case for sectors in \cite{AldRow}, using Carslaw's formula for the heat kernel, now on an infinite cone, and look for all the terms and contributions to the formula given above. However, a more straightforward approach is obtained if we use the relationship between the determinant on the sector and the cone 
\[ \frac{d}{d \alpha}  \log \det (\Delta_{C_\alpha}) =  \frac{d}{d \alpha} \log \det (\Delta_{S_{\alpha/2}}) .\]
Then one simply uses the formula we have obtained on the right side.

\end{appendix}
\providecommand{\bysame}{\leavevmode\hbox to3em{\hrulefill}\thinspace}
\providecommand{\MR}{\relax\ifhmode\unskip\space\fi MR }
\providecommand{\MRhref}[2]{%
  \href{http://www.ams.org/mathscinet-getitem?mr=#1}{#2}
}
\providecommand{\href}[2]{#2}

\address{Departamento de Matem\'aticas y Estad\'{\i}stica. Universidad del Norte. Km 5 Via Puerto Colombia.  Area Metropolitana de Barranquilla, Colombia}
\email{claldana@uninorte.edu.co, clara.aldana@posteo.net}

\address{GCAP-CASPER, Department of Mathematics, Baylor University Waco, Texas, 76798, USA and Mathematical Reviews, American Mathematical Society, 416 4th Street, Ann Arbor, MI 48103, USA}
\email{Klaus\_Kirsten@baylor.edu}

\address{Department of Mathematics, Chalmers University of Technology and the University of Gothenburg, 41296 Gothenburg, Sweden}
\email{julie.rowlett@chalmers.se}

\end{document}